\newtheorem{thm}{Theorem}[section]
\newtheorem{lem}[thm]{Lemma}
\newtheorem{prop}[thm]{Proposition}
\newtheorem{cor}[thm]{Corollary}
\theoremstyle{definition}
\newtheorem{dfn}[thm]{Definition}
\newtheorem{ex}[thm]{Example}
\newtheorem{rmk}[thm]{Remark}
\theoremstyle{remark}
\newtheorem*{ac}{Acknowlegments}
\newtheorem*{proof of claim}{Proof of Claim}
\numberwithin{equation}{thm}
\def\ass{\operatorname{Ass}}
\def\ann{\operatorname{Ann}}
\def\cm{\mathsf{CM}}
\def\depth{\operatorname{depth}}
\def\Ext{\operatorname{Ext}}
\def\ge{\geqslant}
\def\height{\operatorname{ht}}
\def\ker{\operatorname{Ker}}
\def\le{\leqslant}
\def\m{\mathfrak{m}}
\def\p{\mathfrak{p}}
\def\q{\mathfrak{q}}
\def\spec{\operatorname{Spec}}
\def\supp{\operatorname{Supp}}
\def\V{\mathrm{V}}
\def\codepth{\operatorname{codepth}}
\def\grade{\operatorname{grade}}
\def\reg{\mathsf{Reg}}
\def\C{\mathsf{C}}
\def\Z{\mathbb{Z}}
\begin{document}
\allowdisplaybreaks
\title{Asymptotic stability of depths of localizations of modules}
\author{Kaito Kimura}
\address{Graduate School of Mathematics, Nagoya University, Furocho, Chikusaku, Nagoya 464-8602, Japan}
\email{m21018b@math.nagoya-u.ac.jp}
\thanks{2020 {\em Mathematics Subject Classification.} Primary 13C15; Secondary 13A30, 13C14.}
\thanks{{\em Key words and phrases.} asymptotic stability, depth, Cohen--Macaulay, openness of loci, graded module.}
\begin{abstract}
Let $R$ be a commutative noetherian ring, $I$ an ideal of $R$, and $M$ a finitely generated $R$-module.
The asymptotic behavior of the quotient modules $M/I^n M$ of $M$ is an actively studied subject in commutative algebra.
The main result of this paper asserts that the depth of the localization of $M/I^n M$ at any prime ideal of $R$ is stable for large integers $n$ that do not depend on the prime ideal, if the module $M$ or $M/I^n M$ is Cohen--Macaulay for some $n>0$, or the ring $R$ is one of the following: 
a homomorphic image of a Cohen--Macaulay ring, a semi-local ring, an excellent ring, a quasi-excellent and catenary ring, and an acceptable ring.
\end{abstract}
\maketitle
\section{Introduction}

Throughout the present paper, all rings are assumed to be commutative and noetherian.
Let $R$ be a ring, $I$ an ideal of $R$, and $M$ a finitely generated $R$-module.
The asymptotic behavior of the quotient modules $M/I^n M$ of $M$ for large integers $n$ is one of the most classical subjects in commutative algebra. 
Among other things, the asymptotic stability of the associated prime ideals and depths of $M/I^n M$ has been actively studied.
Brodmann \cite{B} proved that the set of associated prime ideals of $M/I^n M$ is stable for large $n$.
Kodiyalam \cite{Ko} showed that the depth of $M/I^n M$ attains a stable constant value for all large $n$ when $R$ is local.
There are a lot of studies about this subject; see \cite{Br, M, RS} for instance.

The purpose of this paper is to proceed with the study of the above subject.
In particular, we consider the existence of an integer $k$ such that $\depth (M/I^t M)_\p=\depth (M/I^k M)_\p$ for all integers $t\ge k$ and all prime ideals $\p$ of $R$.
In this direction, 
by using the openness of the codepth loci of modules over excellent rings studied by Grothendieck \cite{G},
Rotthaus and \c{S}ega \cite{RS} proved that such an integer $k$ exists if $R$ is excellent, $M$ is Cohen--Macaulay, and $I$ contains an $M$-regular element.
We aim to improve their theorem by applying the ideas of their proofs.
However, in our proof, we use the methods developed in \cite{K} not those of Grothendieck.

The main result of this paper is the following theorem; for the definition of an acceptable ring in the sense of Sharp \cite{S} see Definition \ref{def rings}.
Obviously, we may replace all $\bar{R}$ in the result below with $R$.
It gives a common generalization of the above mentioned theorems proved in \cite{Ko} and \cite{RS}.

\begin{thm}[Corollary \ref{main cor}]\label{main}
Let $R$ be a ring, $I$ an ideal of $R$, and $M$ a finitely generated $R$-module.
Put $\bar{R}=R/(I+\operatorname{Ann}_R (M))$.
Then there is an integer $k>0$ such that
$$
\depth (M/I^t M)_\p=\depth (M/I^k M)_\p
$$
for all integers $t\ge k$ and all prime ideals $\p$ of $R$ in each of the following cases.
\begin{enumerate}[\rm(1)]
\item $M$ is Cohen--Macaulay. 
\item $M/I^n M$ is Cohen--Macaulay for some $n>0$.
\item $\bar{R}$ is a homomorphic image of a Cohen--Macaulay ring. 
\item $\bar{R}$ is semi-local.
\item $\bar{R}$ is excellent.
\item $\bar{R}$ is quasi-excellent and catenary.
\item $\bar{R}$ is acceptable.
\end{enumerate}
\end{thm}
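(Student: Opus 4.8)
The plan is to combine Kodiyalam's prime-by-prime stability \cite{Ko} with the closedness of codepth loci, and to upgrade the former to a statement uniform in $\p$ by noetherian induction on $\spec\bar R$; this is the strategy of Rotthaus--\c{S}ega \cite{RS}, but with the closedness results of \cite{K} in place of Grothendieck's openness theorem for excellent rings.

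First, the reductions. For every $n\ge1$ one has $\supp_R(M/I^nM)=\V(I^n+\operatorname{Ann}_R M)=\V(I+\operatorname{Ann}_R M)=\spec\bar R$, so $(M/I^nM)_\p\ne0$ precisely when $\p\in\spec\bar R$; for the other primes both sides of the asserted equality are the depth of the zero module and there is nothing to prove. Fix $\p\in\spec\bar R$. Applying \cite{Ko} to $R_\p$, $IR_\p$, $M_\p$ shows that $\big(\depth(M/I^tM)_\p\big)_{t\ge1}$ is eventually constant. Since $\dim(M/I^tM)_\p=\dim(M/IM)_\p$ does not depend on $t$, this sequence stabilizes if and only if the codepth $\codepth(M/I^tM)_\p=\dim(M/I^tM)_\p-\depth(M/I^tM)_\p$ does, and the real task is to bound, uniformly in $\p$, the index past which the latter is constant.

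For cases (3), (5), (6), (7), the input is that under the hypothesis on $\bar R$, for every finitely generated $R$-module $N$ and every $c\ge0$ the set $\{\p:\codepth N_\p\ge c\}$ is closed in $\spec R$: this is Grothendieck's theorem \cite{G} when $\bar R$ is excellent and is provided by \cite{K} in the remaining three cases. Put $W_{n,c}=\{\p\in\spec\bar R:\codepth(M/I^nM)_\p\ge c\}$, a closed set, and argue by noetherian induction: assuming the conclusion on every proper closed subset of a closed $Z\subseteq\spec\bar R$, prove it on $Z$. If $Z$ is reducible, its components are proper closed subsets and one takes the maximum of the associated indices, so assume $Z$ irreducible with generic point $\eta$ and fix $k_0$ with $\codepth(M/I^tM)_\eta=c_\eta$ for all $t\ge k_0$. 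Because a nonempty open subset of $Z$ contains $\eta$, closedness of $W_{t,c_\eta}$ forces $\codepth(M/I^tM)_\p\ge c_\eta$ for all $\p\in Z$ and $t\ge k_0$, while $Z\cap W_{t,c_\eta+1}$ is a proper closed subset of $Z$ off which the codepth equals $c_\eta$. It thus suffices to know that $\bigcup_{t\ge k_0}\big(Z\cap W_{t,c_\eta+1}\big)$ lies in a single proper closed subset of $Z$, for then the inductive hypothesis, applied there and combined with the two displayed facts, finishes the proof on $Z$.

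This last point is the main obstacle: closedness of the individual $W_{t,c}$ is not enough, since a pointwise-convergent sequence of closed subsets of a noetherian space need not be eventually constant. One must show in addition that the chain $(W_{t,c})_{t\ge k_0}$ is eventually descending --- equivalently, that $\depth(M/I^tM)_\p$ is eventually nondecreasing in $t$, uniformly in $\p$ --- or that all these loci lie in a fixed proper closed subset controlled by the associated graded module $G_I(M)=\bigoplus_{n\ge0}I^nM/I^{n+1}M$. This is to be extracted from the exact sequences $0\to I^tM/I^{t+1}M\to M/I^{t+1}M\to M/I^tM\to0$, which reduce the matter to comparing the asymptotic depths of the graded pieces of $G_I(M)$ with those of $M/I^tM$; the finiteness of the relevant invariants of $G_I(M)$ (cf. \cite{M}) together with the methods of \cite{K} supply this. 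Case (4) is then deduced from case (5): if $\bar R$ is semilocal it suffices to handle each of its finitely many maximal ideals and take the maximum of the resulting indices, so we may localize at the unique maximal ideal lying in $\spec\bar R$ and assume $R$ local; then $\widehat R$ is complete local, hence excellent, and $\widehat R/(I\widehat R+\operatorname{Ann}_{\widehat R}\widehat M)$ --- the ring attached by the theorem to $(\widehat R,I\widehat R,\widehat M)$ --- is the completion of $\bar R$, so case (5) yields a uniform $k$ for this data. For $\p\in\spec R$ pick a prime $\mathfrak P$ of $\widehat R$ minimal over $\p\widehat R$; then $\mathfrak P\cap R=\p$ and the closed fibre of the flat local map $R_\p\to\widehat R_{\mathfrak P}$ has depth $0$, so $\depth(M/I^tM)_\p=\depth\big(\widehat M/I^t\widehat M\big)_{\mathfrak P}$, which is independent of $t\ge k$ since $\mathfrak P$ does not depend on $t$. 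Finally, in the Cohen--Macaulay cases (1) and (2), $M$ (respectively $M/I^nM$ for $n\gg0$) being Cohen--Macaulay makes the comparison between $M/I^tM$ and the graded pieces of $G_I(M)$ completely explicit --- essentially the original argument of Rotthaus--\c{S}ega --- so no hypothesis on $R$ is needed there, and Theorem~\ref{main} contains both their result and Kodiyalam's.
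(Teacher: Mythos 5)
Your high-level framework is right --- use openness/closedness of codepth loci (from \cite{K} in the non-excellent cases), reduce the problem to the associated graded module, and aim for a uniform bound on the stabilization index --- but the proposal leaves the crucial technical step unresolved, and you say so yourself.  You correctly observe that ``closedness of the individual $W_{t,c}$ is not enough, since a pointwise-convergent sequence of closed subsets of a noetherian space need not be eventually constant,'' and then write ``This is to be extracted from the exact sequences \dots the methods of \cite{K} supply this.''  That sentence is where the proof is supposed to be, and it is not there.  Showing that $\depth(M/I^tM)_\p$ is eventually nondecreasing in $t$ uniformly in $\p$, or that the $W_{t,c}$ lie in a fixed proper closed set, is exactly the hard part and is not a formal consequence of the ingredients you list.

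The paper does \emph{not} try to prove monotonicity of the individual loci or to run a noetherian induction over $\spec\bar R$.  Instead it (i) first proves a uniform-in-$\p$ stabilization of $\depth(I^tM/I^{t+1}M)_\p$ (Lemma~\ref{RS4.2}), which itself requires a delicate argument: one uses openness of the codepth loci of the ``tail'' submodules $N_t=\bigoplus_{i\ge t}I^iM/I^{i+1}M$, stabilizes these open sets by an elementary noetherian lemma (Lemma~\ref{RS4.0}), reduces to finitely many associated primes of the complements, and applies a local stabilization (Lemma~\ref{RS4.1}) at those primes; (ii) then replaces the individual codepth loci $\C_n^R(M/I^iM)$ by the \emph{unions} $U_n^t=\bigcup_{m\le i\le t}\C_n^R(M/I^iM)$, which are monotone increasing by construction and hence stabilize for free, with no need to know a priori that the individual loci behave monotonically; (iii) finally deduces from the stabilized graded-piece depths, together with a careful induction along the exact sequences $0\to I^iM/I^{i+1}M\to M/I^{i+1}M\to M/I^iM\to 0$ and the depth lemma, that each individual locus $\C_n^R(M/I^iM)$ coincides with the stabilized union.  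This union trick sidesteps precisely the obstacle you flag.

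There is also a concrete error in your handling of cases (1) and (2).  You write that in the Cohen--Macaulay cases ``no hypothesis on $R$ is needed there'' and appeal to ``essentially the original argument of Rotthaus--\c{S}ega,'' but Rotthaus--\c{S}ega assume $R$ excellent, which is not part of the hypothesis here.  The paper instead \emph{derives} the required ring-theoretic hypotheses of its Theorem~\ref{local depth} (catenariness of $R/\operatorname{Ann}_R M$, and that $\cm(R/\p)$ contains a nonempty open subset of $\spec(R/\p)$ for $\p\in\supp_R M$) from the assumption that $M$ is Cohen--Macaulay, via \cite[Theorem 2.1.3\,(b)]{BH} and \cite[Theorem 5.4]{K}.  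This deduction is not trivial and is missing from your sketch.  Your reduction of the semi-local case (4) to the excellent case by completion is fine in spirit and matches the paper's.
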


The organization of this paper is as follows. 
In Section 2, we give several definitions and basic lemmas about graded rings and modules.
In Section 3, we study the openness of the codepth loci of graded modules.
We give a sufficient condition for the codepth loci of a graded module to be open, and for the depths of localizations of homogeneous components of a graded module to be eventually stable.
In Section 4, we prove Theorem \ref{main} and consider some examples.

\section{Definitions and lemmas}

This section is devoted to preliminaries for the later sections.
We give several definitions and basic lemmas about graded rings and modules.

In this section, we assume that $A=\bigoplus_{i\ge 0}A_i$ is a graded ring and that $M=\bigoplus_{i\in \Z}M_i$ is a finitely generated graded $A$-module.
The ring $A$ is a finitely generated $A_0$-algebra.
For any $i\in \Z$, the $A_0$-module $M_i$ is finitely generated.
Let $S$ be a multiplicatively closed subset of $A_0$.
Then $A_S=\bigoplus_{i\ge 0}(A_i)_S$ is also a graded ring, and $M_S=\bigoplus_{i\in \Z}(M_i)_S$ is a finitely generated graded $A_S$-module.
In particular, $A_\p$ is a graded ring having the local base ring $(A_0)_\p$ for any prime ideal $\p$ of $A_0$.
Similarly, $A/IA$ and $M/IM$ are graded for any ideal $I$ of $A_0$.
A graded ring $A$ which as an $A_0$-algebra is generated by elements of $A_1$ will be called \textit{homogeneous}.
Every ring $R$ is a graded ring $A$ with $A_0=R$ and $A_i=0$ for all $i\ge0$. 

We denote by $\operatorname{Ann}_{A_0} (M)$ the annihilator ideal of $M$.
The \textit{dimension} of $M$ as an $A_0$-module is given by $\dim_{A_0}(M) =\dim (A_0/\operatorname{Ann}_{A_0} (M))$.
Let $A_0$ be a local ring.
In general, $M$ is not finitely generated as an $A_0$-module.
Here, the \textit{depth} of $M$ as an $A_0$-module is defined as follows; see \cite[Definition 1.2.1]{RS}.
Note that this coincides with the one defined in \cite[Definition 9.1.1]{BH}.

\begin{dfn}
Let $(A_0, \m_0)$ be a local ring.
If $M$ is the zero module, then we set $\depth_{A_0}(M)=\infty$;
otherwise, we define $\depth_{A_0}(M)={\rm sup}\{n \ge 0 \mid$ there is an $M$-regular sequence $\bm{x}=x_1,\ldots,x_n$ in $\m_0 \}$.
Also, if $M$ is the zero module, then we set $\codepth_{A_0}(M)=-\infty$;
otherwise we define $\codepth_{A_0}(M)=\dim_{A_0}(M)-\depth_{A_0}(M)$.
\end{dfn}

In this paper, the following notation is used.

\begin{dfn}
Let $R$ be a ring, $I$ an ideal of $R$, and $n\ge 0$ an integer.
\begin{itemize}
\item $\V_{R}(I) = \{\p\in\spec (R)\mid I\subseteq\p\}$.
\item $\cm(R) = \{\p\in\spec (R)\mid \dim(R_\p) \le \depth(R_\p) \}$.
\item $\C_n^{A_0}(M) = \{\p\in\spec (A_0)\mid \codepth_{(A_0)_\p} (M_\p) \le n  \}$.
\item $\cm_{A_0} (M) = \{\p\in\spec (A_0)\mid \dim_{(A_0)_\p} (M_\p) \le \depth_{(A_0)_\p} (M_\p)  \} = \C_0^{A_0}(M)$.
\end{itemize}
\end{dfn}

We prepare several basic lemmas.
Some of the results below are proved in \cite[Section 1]{RS}.
However, due to some differences, we include proofs of those for the benefit of the reader.

\begin{lem}\label{RS1.1.1}
Suppose that $A$ is homogeneous.
Then there exists an integer $k$ such that $\ann_{A_0}(M_t)=\ann_{A_0}(M_k)$ for all integers $t\ge k$.
\end{lem}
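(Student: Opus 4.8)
The plan is to exploit the finite generation of $A$ as an $A_0$-algebra together with the homogeneity hypothesis, which forces the graded pieces $M_t$ to be governed, for large $t$, by the action of the degree-one part $A_1$. First I would fix homogeneous generators $y_1,\dots,y_r$ of the $A_0$-algebra $A$ lying in $A_1$ (possible since $A$ is homogeneous), and homogeneous generators of the $A$-module $M$; let $d$ be the maximum of their degrees. Then for every $t\ge d$ we have $M_{t+1}=A_1 M_t=\sum_{i=1}^r y_i M_t$, so multiplication gives a surjection $M_t^{\oplus r}\twoheadrightarrow M_{t+1}$ of finitely generated $A_0$-modules. Consequently $\ann_{A_0}(M_t)\subseteq\ann_{A_0}(M_{t+1})$ for all $t\ge d$: we get an ascending chain of ideals of the noetherian ring $A_0$, which must stabilize, say from index $k\ge d$ onward, giving $\ann_{A_0}(M_t)=\ann_{A_0}(M_k)$ for all $t\ge k$.

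The one point that needs a little care is the inclusion $\ann_{A_0}(M_t)\subseteq\ann_{A_0}(M_{t+1})$, but this is immediate: if $a\in A_0$ kills $M_t$, then it kills $A_1 M_t=M_{t+1}$ since $a$ commutes with the elements of $A_1$ in the graded ring $A$. So the surjection $M_t^{\oplus r}\to M_{t+1}$, $(\xi_1,\dots,\xi_r)\mapsto\sum y_i\xi_i$, is $A_0$-linear and shows any annihilator of the source annihilates the target. I would also remark that $A_0$ being noetherian is exactly what the excerpt guarantees ($A$ is a finitely generated $A_0$-algebra and $A$, hence $A_0$, is noetherian under our standing hypothesis), so the ascending chain condition applies.

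I do not expect any genuine obstacle here; the only thing to be slightly careful about is the range of validity. The chain $\ann_{A_0}(M_d)\subseteq\ann_{A_0}(M_{d+1})\subseteq\cdots$ stabilizes at some $k\ge d$, and one should state the conclusion for $t\ge k$ rather than $t\ge d$, since below degree $d$ the components $M_t$ need not surject appropriately onto $M_{t+1}$. It is also worth noting that the statement is vacuous or trivial when $M_t=0$ for all large $t$ (then $\ann_{A_0}(M_t)=A_0$ eventually), and this case is covered automatically by the same argument. Thus the whole proof is: reduce degree-shifting to a surjection via generators in $A_1$, deduce an ascending chain of annihilator ideals, and invoke noetherianity.
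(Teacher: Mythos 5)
Your proof is correct and follows essentially the same line as the paper's: use homogeneity and finite generation to get $M_{t+1}=A_1 M_t$ for $t\gg 0$, deduce the inclusion $\ann_{A_0}(M_t)\subseteq\ann_{A_0}(M_{t+1})$, and conclude by the ascending chain condition in the noetherian ring $A_0$. The explicit surjection $M_t^{\oplus r}\twoheadrightarrow M_{t+1}$ is a slightly more detailed way of stating what the paper leaves implicit, but the argument is identical.
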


\begin{proof}
There is an integer $l$ such that $M_{t+1}=A_1 M_t$ for any integer $t\ge l$ since $A$ is homogeneous and $M$ is a finitely generated $A$-module.
For any $t\ge l$, the ideal $\ann_{A_0}(M_{t+1})$ contains $\ann_{A_0}(M_t)$.
As $A_0$ is noetherian, there exists an integer $k$ such that $\ann_{A_0}(M_t)=\ann_{A_0}(M_k)$ for all integers $t\ge k$.
\end{proof}

\begin{lem}\label{RS1.3}
The function $F:\ass_A(M) \to \ass_{A_0}(M)$ defined by $F(P)=P\cap A_0$ is well defined and surjective.
In particular, $\ass_{A_0}(M)=\bigcup_{i\in \Z} \ass_{A_0}(M_i)$ is a finite set.
\end{lem}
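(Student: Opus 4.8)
The plan is to establish the two properties of $F$ claimed — well-definedness and surjectivity — and then to read off the concluding sentence from them. Well-definedness is formal, and I would dispatch it first: given $P\in\ass_A(M)$, write $P=\ann_A(m)$ with $m\neq 0$. Since an element of $A_0$ annihilates $m$ over $A$ exactly when it annihilates $m$ over $A_0$, we have $F(P)=P\cap A_0=\ann_{A_0}(m)$, and the contraction of a prime ideal along the inclusion $A_0\hookrightarrow A$ is again prime; hence $F(P)$ is a prime of $A_0$ of the form $\ann_{A_0}(m)$ with $m\neq 0$, so $F(P)\in\ass_{A_0}(M)$.

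For surjectivity, the first step is to record that every $\p\in\ass_{A_0}(M)$ can be written as $\ann_{A_0}(x)$ for a \emph{homogeneous} element $x$: if $\p=\ann_{A_0}(y)$ and $y=y_{d_1}+\cdots+y_{d_r}$ is the decomposition into homogeneous components, then $\ann_{A_0}(y)=\bigcap_j\ann_{A_0}(y_{d_j})$ since the $ay_{d_j}$ lie in distinct degrees, and primality of $\p$ forces $\p=\ann_{A_0}(y_{d_j})$ for some $j$. Fixing such an $x\in M_d$ — necessarily nonzero, as $\p$ is proper — I would pass to the finitely generated graded submodule $L=Ax$. Because $A$ is commutative and $\p x=0$, the homogeneous ideal $\p A$ annihilates $L$, so $L$ is a module over the graded ring $\bar A:=A/\p A$, whose base ring $\bar A_0=A_0/\p$ is a \emph{domain}. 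Writing $J=\ann_{\bar A}(x)$, the map $\bar a\mapsto\bar a x$ shows $\ass_{\bar A}(L)=\ass_{\bar A}(\bar A/J)$, while $J\cap\bar A_0=0$ because $\ann_A(x)\cap A_0=\ann_{A_0}(x)=\p$, so $\bar A_0$ embeds in $\bar A/J$. The crux is then that some \emph{minimal} prime $\bar P$ of $\bar A/J$ satisfies $\bar P\cap\bar A_0=0$: if $\bar P_1,\dots,\bar P_s$ are the minimal primes of $\bar A/J$ (there is at least one, since $L\neq 0$), then $\bigcap_k(\bar P_k\cap\bar A_0)=\big(\bigcap_k\bar P_k\big)\cap\bar A_0$ is the contraction of the nilradical of $\bar A/J$ to the reduced ring $\bar A_0$, hence is $0$; since a finite intersection of nonzero ideals of a domain is nonzero, some $\bar P_k\cap\bar A_0$ is already $0$. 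A minimal prime is associated, so $\bar P\in\ass_{\bar A}(L)$; lifting $\bar P$ along $A\twoheadrightarrow\bar A$ — legitimate because $\p A$ kills $L$ — yields $P\in\ass_A(L)\subseteq\ass_A(M)$ with $F(P)=P\cap A_0=\p$. I expect this last maneuver to be the only genuine obstacle: the difficulty is to manufacture an associated prime of $M$ over $A$ that contracts to $\p$ \emph{exactly} rather than to a strictly larger prime, and the device that makes it work is cutting down to the cyclic module $Ax$, killing $\p A$ so the base ring becomes a domain, and invoking the elementary fact about intersections of nonzero ideals in a domain; the remaining steps are bookkeeping about contractions of primes and the insensitivity of $\ass$ to degree shifts.

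Finally, for the concluding sentence: each $M_i$ is an $A_0$-submodule of $M$, so $\bigcup_{i\in\Z}\ass_{A_0}(M_i)\subseteq\ass_{A_0}(M)$, and the homogeneous-element observation above gives the reverse inclusion, whence $\ass_{A_0}(M)=\bigcup_{i\in\Z}\ass_{A_0}(M_i)$. Finiteness is then immediate: by the first two parts $\ass_{A_0}(M)=F(\ass_A(M))$, and $\ass_A(M)$ is a finite set because $M$ is finitely generated over the noetherian ring $A$.
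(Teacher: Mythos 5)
Your proof is correct, and it takes a genuinely different route from the paper's. The paper proves surjectivity by localizing: given $\p\in\ass_{A_0}(M)$, it passes to $A_\p$ and $M_\p$, invokes the correspondence of associated primes under localization together with the fact that an associated prime of a graded module over the graded ring $A_\p$ is itself graded and hence contained in the homogeneous maximal ideal $\p(A_0)_\p\oplus\bigoplus_{i>0}(A_i)_\p$, and then pulls back. Your argument instead avoids localization entirely: you first replace $y$ with a homogeneous component $x$ so that $\p=\ann_{A_0}(x)$, cut down to the cyclic graded submodule $L=Ax$, kill $\p A$ to make the base ring a domain $\bar A_0=A_0/\p$, and then locate a minimal (hence associated) prime of $\bar A/J$ that contracts to $(0)$ in $\bar A_0$ by the elementary observation that a finite intersection of nonzero ideals of a domain is nonzero. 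What the paper's proof buys is brevity, leaning on cited results of Matsumura and Bruns--Herzog; what yours buys is self-containment and a more explicit mechanism for producing an associated prime of $M$ over $A$ contracting \emph{exactly} to $\p$, without appealing to the structure theory of graded local rings. Your handling of the last clause (the equality $\ass_{A_0}(M)=\bigcup_i\ass_{A_0}(M_i)$ via the homogeneous-element reduction, and finiteness via surjectivity of $F$ plus finiteness of $\ass_A(M)$) matches the intended reading of the paper's ``in particular'' and is correct.
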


\begin{proof}
For any prime ideal $P$ of $A$, there is the natural injection from $A_0/P\cap A_0$ to $A/P$.
Hence $F$ is well defined.
Let $\p\in\ass_{A_0}(M)$.
It follows from \cite[Theorem 6.2]{Mat} and \cite[Proposition 1.2.1]{BH} that there exists a prime ideal $Q$ of $A_\p$ which belongs to $\ass_{A_\p}(M_\p)$ and contains $\p A_\p$.
Then $Q=P A_\p$ for some $P\in \ass_A(M)$.
The ideal $Q$ is contained in the maximal ideal $\m=\p(A_0)_\p\oplus\bigoplus_{i>0} (A_i)_\p$ of $A_\p$ as $Q$ is graded; see \cite[Lemma 1.5.6]{BH}.
We easily see that $\p=P\cap A_0$.
\end{proof}

\begin{lem}\label{local lemma}
Let $(A_0, \m_0)$ be a local ring.
\begin{enumerate}[\rm(1)]
\item 
There is an equality $\dim_{A_0}(M)={\rm sup}\{\dim_{A_0}(M_i) \mid i\in \Z \}$.
\item 
One has the equality $\depth_{A_0}(M)={\rm inf}\{\depth_{A_0}(M_i) \mid i\in \Z \}$.
\item 
Let $0\to N\to M\to L\to 0$ be an exact sequence of finitely generated graded $A$-modules. Then 
$$
\depth_{A_0}(M)\ge{\rm min}\{\depth_{A_0}(N),\ \depth_{A_0}(L)\}.
$$
\item 
Suppose that a sequence $\bm{x}=x_1,\ldots,x_n$ of elements in $\m_0$ is an $M$-regular sequence. Then we have
$$
\dim_{A_0}(M)=\dim_{A_0}(M/\bm{x} M)+n, \ {\rm and} \ \depth_{A_0}(M)=\depth_{A_0}(M/\bm{x} M)+n.
$$
\end{enumerate}
\end{lem}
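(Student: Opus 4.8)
The plan is to reduce all four statements to assertions about the homogeneous components $M_i$, which are finitely generated modules over the local ring $A_0$ and for which the classical theory of depth and dimension is available. The key structural facts that make this work are that $A_0$ acts on $M$ degree by degree, so that $M/xM$, $M/\bm{x}M$ and the terms of an exact sequence of graded $A$-modules all decompose compatibly into homogeneous pieces, and that by Lemma \ref{RS1.3} the set $\ass_{A_0}(M)=\bigcup_{i}\ass_{A_0}(M_i)$ is \emph{finite}.

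For (1), the inequality $\dim_{A_0}(M_i)\le\dim_{A_0}(M)$ is immediate from $\ann_{A_0}(M)\subseteq\ann_{A_0}(M_i)$. For the reverse inequality I choose a prime $\p$ minimal over $\ann_{A_0}(M)$ with $\dim(A_0/\p)=\dim(A_0/\ann_{A_0}(M))=\dim_{A_0}(M)$. Using that $M$ is finitely generated over $A$ (so that a power of any element of $\bigcap_{\q\in\ass_{A_0}(M)}\q$ kills each of the finitely many generators of $M$) together with the finiteness of $\ass_{A_0}(M)$, one gets $\sqrt{\ann_{A_0}(M)}=\bigcap_{\q\in\ass_{A_0}(M)}\q$, so that the minimal prime $\p$ lies in $\ass_{A_0}(M)$; by Lemma \ref{RS1.3} then $\p\in\ass_{A_0}(M_i)$ for some $i$, whence $\p\supseteq\ann_{A_0}(M_i)$ and $\dim_{A_0}(M)=\dim(A_0/\p)\le\dim_{A_0}(M_i)$.

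Part (2) is the heart of the matter. The first step is to show that a sequence $\bm{x}$ of elements of $\m_0$ is $M$-regular if and only if it is $M_i$-regular for every $i$ with $M_i\neq0$; this goes by induction on the length of $\bm{x}$, the base case being that a single element $x\in\m_0$ is a nonzerodivisor on $M$ precisely when $x\notin\bigcup_{\p\in\ass_{A_0}(M)}\p=\bigcup_i\bigcup_{\p\in\ass_{A_0}(M_i)}\p$, i.e. precisely when $x$ is a nonzerodivisor on each $M_i$, combined with the identification $(M/xM)_i=M_i/xM_i$. Granting this, $\depth_{A_0}(M)\le\inf_i\{\depth_{A_0}(M_i)\}$ is immediate, since an $M$-regular sequence of length $n$ is $M_i$-regular, forcing $\depth_{A_0}(M_i)\ge n$ for the finitely generated $A_0$-module $M_i$. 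For the reverse inequality it suffices to prove by induction on $n$ that if $\depth_{A_0}(M_i)\ge n$ for all $i$ then there is an $M$-regular sequence of length $n$: when $n\ge1$ no $M_i$ has $\m_0$ as an associated prime, so $\m_0\not\subseteq\bigcup_{\p\in\ass_{A_0}(M)}\p$, this being a finite union by Lemma \ref{RS1.3}, and prime avoidance yields $x_1\in\m_0$ a nonzerodivisor on $M$; then $\depth_{A_0}(M_i/x_1M_i)=\depth_{A_0}(M_i)-1\ge n-1$ for every $i$, and the inductive hypothesis applied to the graded $A$-module $M/x_1M$ supplies the remaining $n-1$ terms. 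The case $M=0$ is trivial. The main obstacle is precisely this construction of a \emph{single} element of $\m_0$ that is simultaneously a nonzerodivisor on all homogeneous components $M_i$, which is exactly where the finiteness of $\ass_{A_0}(M)$ from Lemma \ref{RS1.3} is indispensable.

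Parts (3) and (4) then follow quickly. For (3), the short exact sequence yields in each degree $i$ a short exact sequence $0\to N_i\to M_i\to L_i\to0$ of finitely generated $A_0$-modules, so the usual depth lemma gives $\depth_{A_0}(M_i)\ge\min\{\depth_{A_0}(N_i),\depth_{A_0}(L_i)\}$; taking the infimum over $i$ and applying (2) to $M$, $N$ and $L$ gives the claim, using that $\inf_i\min\{a_i,b_i\}\ge\min\{\inf_i a_i,\inf_i b_i\}$. For (4), the first step of the proof of (2) shows that $\bm{x}$ is $M_i$-regular for every $i$ with $M_i\neq0$, and $(M/\bm{x}M)_i=M_i/\bm{x}M_i$; for the finitely generated $A_0$-module $M_i$ one has $\dim_{A_0}(M_i/\bm{x}M_i)=\dim_{A_0}(M_i)-n$ and $\depth_{A_0}(M_i/\bm{x}M_i)=\depth_{A_0}(M_i)-n$, so taking the supremum respectively the infimum over $i$ and invoking (1) respectively (2) yields the two equalities.
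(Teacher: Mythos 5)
Your proof is correct, and the key input — the finiteness of $\ass_{A_0}(M)=\bigcup_i\ass_{A_0}(M_i)$ from Lemma \ref{RS1.3} — is also what drives the paper's proof, but parts (1) and (2) are handled by a noticeably different route. For the reverse inequality in (1) the paper argues by localization: any $\p\supseteq\ann_{A_0}(M)$ satisfies $M_\p\neq0$ (because $M$ is finitely generated over $A$), so $(M_i)_\p\neq0$ for some $i$ and hence $\p\supseteq\ann_{A_0}(M_i)$; this bypasses your computation of $\sqrt{\ann_{A_0}(M)}$ as $\bigcap_{\q\in\ass_{A_0}(M)}\q$ and the subsequent identification of a minimal prime as an associated prime. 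For (2) the paper runs the argument top-down rather than building a regular sequence inductively: take a maximal $M$-regular sequence $\bm{y}$ of length $d=\depth_{A_0}(M)$; since $\m_0$ consists of zerodivisors on $M/\bm{y}M$, Lemma \ref{RS1.3} gives $\m_0\in\ass_{A_0}(M/\bm{y}M)=\bigcup_i\ass_{A_0}(M_i/\bm{y}M_i)$, so $\m_0\in\ass_{A_0}(M_i/\bm{y}M_i)$ for some $i$ and $\bm{y}$ is a maximal $M_i$-regular sequence, giving $\depth_{A_0}(M_i)=d$. Your bottom-up induction with prime avoidance on the finite set $\ass_{A_0}(M)$ is equally valid and uses the same ingredients (finiteness of $\ass_{A_0}(M)$, compatibility of regular sequences with the grading); the paper's version has the minor advantage of directly exhibiting the degree $i$ at which the depth of $M$ is attained, a fact it then reuses in its proof of (3). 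Parts (3) and (4) of your proof are essentially the same as the paper's.
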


\begin{proof}
(1): Since $\ann_{A_0}(M)$ is contained in $\ann_{A_0}(M_i)$, we get $\dim_{A_0}(M)\ge \dim_{A_0}(M_i)$ for any $i\in \Z$, which means $\dim_{A_0}(M)\ge{\rm sup}\{\dim_{A_0}(M_i) \mid i\in \Z \}$.
Conversely, if $\p$ is a prime ideal of $A_0$ containing $\ann_{A_0}(M)$, then $M_\p$ is not the zero module because $M$ is finitely generated as an $A$-module.
So $(M_i)_\p\ne 0$ for some $i\in \Z$, and thus $\dim(R/\p)\le\dim_{A_0}(M_i)$.
This says that the other inequality holds.

(2): By definition, we observe that $\depth_{A_0}(M)\le{\rm inf}\{\depth_{A_0}(M_i) \mid i\in \Z \}$.
Let $d=\depth_{A_0}(M)$ and let $\bm{y}=y_1,\ldots,y_d$ be a $M$-regular sequence in $\m_0$.
The ideal $\m_0$ consists of zero-divisors of $M/\bm{y} M$.
It follows from \cite[Theorem 6.1]{Mat} and Lemma \ref{RS1.3} that $\m_0$ is in $\ass_{A_0}(M/\bm{y} M)=\bigcup_{i\in \Z} \ass_{A_0}(M_i/\bm{y} M_i)$.
The other inequality holds as $\bm{y}$ is a maximal $M_i$-regular sequence for some $i\in \Z$.

(3): We can choose an integer $i\in \Z$ such that $\depth_{A_0}(M)=\depth_{A_0}(M_i)$ by (2).
There is an exact sequence $0\to N_i\to M_i\to L_i\to 0$ of $A_0$-modules.
It follows from (2) and \cite[Proposition 1.2.9]{BH} that
$$
\depth_{A_0}(M)=\depth_{A_0}(M_i)\ge{\rm min}\{\depth_{A_0}(N_i),\ \depth_{A_0}(L_i)\} 
\ge{\rm min}\{\depth_{A_0}(N),\ \depth_{A_0}(L)\}.
$$

(4): The assertion follows from (1) and (2).
\end{proof}

\begin{lem}\label{graded lemma}
Let $\p$ be a prime ideal of $A_0$, and let $I=\ann_{A_0}(M)$.
\begin{enumerate}[\rm(1)]
\item 
If $\p$ belongs to $\supp_{A_0}(M)=\V_{A_0}(I)$, then $\dim_{(A_0)_\p}(M_\p)=\height (\p/I)$.
\item 
Suppose that a sequence $\bm{x}=x_1,\ldots,x_n$ of elements in $\p$ is an $M_\p$-regular sequence.
Then there exists $f\in A_0\setminus\p$ such that $\bm{x}$ is an $M_f$-regular sequence.
\item 
The prime ideal $\p$ belongs to $\ass_{A_0}(M)$ if and only if $\depth_{(A_0)_\p}(M_\p)=0$.
\end{enumerate}
\end{lem}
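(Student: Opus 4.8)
The plan is to prove the three parts separately: (1) and (3) reduce to standard commutative algebra once one records how annihilators and associated primes behave under the graded localization at $\p$, and (2) is a routine induction on $n$. For (1), the key observation is that $\ann_{(A_0)_\p}(M_\p)=(\ann_{A_0}(M))(A_0)_\p$. Although $M$ need not be finitely generated over $A_0$, it is a noetherian $A$-module, hence has finitely many homogeneous $A$-generators $m_1,\dots,m_r$; thus $\ann_{A_0}(M)=\bigcap_{j=1}^{r}(0:_{A_0}m_j)$ is a finite intersection of annihilators of finitely generated (cyclic) $A_0$-modules, and such a finite intersection commutes with localization. Since $m_1/1,\dots,m_r/1$ generate $M_\p$ over $A_\p$, the localized intersection equals $\ann_{(A_0)_\p}(M_\p)$. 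Writing $I=\ann_{A_0}(M)$ and using $\p\in\V_{A_0}(I)$, one concludes
$$
\dim_{(A_0)_\p}(M_\p)=\dim\bigl((A_0)_\p/I(A_0)_\p\bigr)=\dim\bigl((A_0/I)_{\p/I}\bigr)=\height(\p/I).
$$

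For (2), I would induct on $n$. For $n=1$, $x_1$ is regular on $M_\p$ iff $(0:_Mx_1)_\p=0$; since $(0:_Mx_1)$ is again a noetherian $A$-module, finitely many homogeneous generators of it are annihilated by a common $f\in A_0\setminus\p$, so $(0:_{M_f}x_1)=(0:_Mx_1)_f=0$ and $x_1$ is $M_f$-regular (the case $M_\p=0$ is trivial, and when $M_\p\ne0$ the relevant quotients remain nonzero by graded Nakayama). For the inductive step, choose such an $f_1\in A_0\setminus\p$ for $x_1$, put $\bar M=M_{f_1}/x_1M_{f_1}$, and note $\bar M_\p=M_\p/x_1M_\p$, so that $x_2,\dots,x_n$ is $\bar M_\p$-regular. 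Applying the induction hypothesis to $\bar M$ over the base ring $(A_0)_{f_1}$ and the prime $\p(A_0)_{f_1}$ yields $g\in(A_0)_{f_1}\setminus\p(A_0)_{f_1}$ with $x_2,\dots,x_n$ being $\bar M_g$-regular; clearing denominators, $g=h/f_1^{N}$ with $h\in A_0\setminus\p$, and $f=f_1h$ works.

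For (3), suppose first $\p\in\ass_{A_0}(M)$. By Lemma \ref{RS1.3}, $\p\in\ass_{A_0}(M_i)$ for some $i$; as $M_i$ is finitely generated over $A_0$, this gives $\p(A_0)_\p\in\ass_{(A_0)_\p}((M_i)_\p)$, so $\depth_{(A_0)_\p}((M_i)_\p)=0$, and hence $\depth_{(A_0)_\p}(M_\p)=0$ by Lemma \ref{local lemma}(2) (the value is $0$ since $M_\p\ne0$). Conversely, if $\depth_{(A_0)_\p}(M_\p)=0$, then by Lemma \ref{local lemma}(2) the infimum $\inf_i\depth_{(A_0)_\p}((M_i)_\p)$ equals $0$ and is attained at some $i$; then $(M_i)_\p$ is a nonzero finitely generated module over the local ring $(A_0)_\p$ of depth $0$, so $\p(A_0)_\p\in\ass_{(A_0)_\p}((M_i)_\p)$ and therefore $\p\in\ass_{A_0}(M_i)\subseteq\ass_{A_0}(M)$. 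The only delicate point in the whole argument is the behavior of annihilators, colon modules, and supports under inverting elements of $A_0$ while $M$ fails to be $A_0$-finite; this is exactly where finiteness of $M$ over the noetherian ring $A$ is used, and with that in place every step is standard.
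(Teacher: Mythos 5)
Your proof is correct and follows essentially the same route as the paper's: for (1) you compute $\ann_{(A_0)_\p}(M_\p)=I(A_0)_\p$ from finite homogeneous generation, for (2) you reduce to $n=1$ and kill a finitely generated kernel by localizing at a single element, and for (3) you pass through a single homogeneous component via Lemma \ref{RS1.3} and Lemma \ref{local lemma}(2). The paper states these steps more tersely (e.g.\ ``we may assume $n=1$'' in place of your explicit induction), but the underlying ideas coincide.
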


\begin{proof}
(1): It is seen that $\ann_{(A_0)_\p}(M_\p)=I(A_0)_\p$ since $M$ is a finitely generated $A$-module.

(2):  We may assume $n=1$.
Let $\varphi$ be the multiplication map of $M$ by $x_1$.
The submodule $\ker \varphi$ of $M$ is a finitely generated $A$-module, and $(\ker \varphi)_\p$ is the zero module.
We have $(\ker \varphi)_f=0$ for some $f\in A_0\setminus\p$, which means that $x_1$ is an $M_f$-regular element.
 
(3): It follows from \cite[Theorem 6.2]{Mat} that $\p$ is in $\ass_{A_0}(M)$ if and only if $\p(A_0)_\p$ is in $\ass_{(A_0)_\p}(M_\p)$.
Hence the ``only if'' part is trivial.
In order to prove the ``if'' part, suppose $\depth_{(A_0)_\p}(M_\p)=0$.
There is an integer $i\in \Z$ such that $\depth_{(A_0)_\p}(M_i)_\p=\depth_{(A_0)_\p}(M_\p)=0$ by Lemma \ref{local lemma} (2).
The prime ideal $\p$ belongs to the subset $\ass_{A_0}(M_i)$ of $\ass_{A_0}(M)$.
\end{proof}

\begin{rmk}\label{supp}
The subset $\supp_{A_0}(M)=\V_{A_0}(\operatorname{Ann}_{A_0} (M))$ of $\spec(A_0)$ is closed.
Therefore, if a prime ideal $\p$ of $A_0$ is not in $\supp_{A_0}(M)$, then the codepth locus $\C_n^{A_0}(M)$ contains a nonempty open subset $(\spec(A_0)\setminus\supp_{A_0}(M))\cap\V_{A_0}(\p)$ of $\V_{A_0}(\p)$ for any $n\ge 0$.
\end{rmk}

We close this section by stating an elementary lemma about open subsets of the spectrum of rings.

\begin{lem}\label{RS4.0}
Let $R$ be a ring, and let $\{U_n^t\}_{n\ge0 ,t\in\Z}$ be a family of open subsets of $\spec(R)$.
Suppose that $U_n^t$ is contained in both $U_n^{t+1}$ and $U_{n+1}^t$ for all $t\in\Z$ and all $n\ge0$.
Then there is an integer $k$ such that $U_n^t=U_n^k$ for all $t\ge k$ and all $n\ge 0$.
\end{lem}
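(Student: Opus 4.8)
The plan is to first reduce the two-parameter stabilization problem to a one-parameter one, and then exploit noetherian-type finiteness of the spectrum. First I would fix a single $t$ (say $t=0$, although any value works since the hypothesis gives $U_n^t\subseteq U_n^{t+1}$ in the $t$-direction too) and consider the ascending chain $U_0^0\subseteq U_1^0\subseteq U_2^0\subseteq\cdots$ of open subsets of $\spec(R)$. Since $\spec(R)$ is a noetherian topological space (as $R$ is noetherian), this chain stabilizes: there is $n_0$ with $U_n^0=U_{n_0}^0$ for all $n\ge n_0$. The same argument applied for each fixed $t$ gives, a priori, a bound depending on $t$; the real content is to make it uniform.

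The key observation is that the doubly-indexed family, read along the "staircase", is eventually constant. I would argue as follows. For each fixed $n$, the chain $U_n^0\subseteq U_n^1\subseteq U_n^2\subseteq\cdots$ stabilizes at some index $k_n$, again by noetherianity of $\spec(R)$; set $V_n=\bigcup_{t\in\Z}U_n^t=U_n^{k_n}$, an open set. The sets $V_n$ form an ascending chain $V_0\subseteq V_1\subseteq\cdots$ (using $U_n^t\subseteq U_{n+1}^t$), which stabilizes at some $n_0$, so $V_n=V_{n_0}$ for all $n\ge n_0$. Now take $k=\max\{k_0,k_1,\dots,k_{n_0}\}$. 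I claim this $k$ works. Indeed, for $t\ge k$ and $n\le n_0$ we have $U_n^k\subseteq U_n^t\subseteq V_n=U_n^{k_n}\subseteq U_n^k$ (the last inclusion because $k\ge k_n$ and the chain in $t$ is increasing), forcing equalities throughout, so $U_n^t=U_n^k$. For $n>n_0$ and $t\ge k$, monotonicity in $n$ gives $U_{n_0}^k\subseteq U_n^k\subseteq U_n^t\subseteq V_n=V_{n_0}=U_{n_0}^{k_{n_0}}\subseteq U_{n_0}^k$, where the last step uses $k\ge k_{n_0}$; again all inclusions are equalities, so $U_n^t=U_n^k=U_{n_0}^k$.

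I expect the main (minor) obstacle to be organizing the chain of inclusions so that uniformity genuinely follows — the temptation is to believe a bound for each fixed $t$ suffices, but one must pass through the "limit" sets $V_n$ to see that only finitely many distinct open sets $V_0\subsetneq V_1\subsetneq\cdots\subsetneq V_{n_0}$ occur, and that each is already attained at some finite stage $k_n$. Once that bookkeeping is in place, taking the maximum of the finitely many witnessing indices $k_0,\dots,k_{n_0}$ closes the argument. The only input used beyond elementary set theory is that $\spec(R)$ is a noetherian topological space, which holds because $R$ is noetherian.
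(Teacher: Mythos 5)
Your proof is correct. It is organized slightly differently from the paper's, but the underlying mechanism is the same: two applications of noetherianity of $\spec(R)$ (ACC on open subsets) followed by taking a maximum of finitely many stabilization indices. Where the paper finds the "ultimate" open set by stabilizing the diagonal chain $U_t^t$ (getting a cutoff $k_1$) and then stabilizing the finitely many rows $n<k_1$ in the $t$-direction, you instead form the row-limits $V_n=\bigcup_t U_n^t=U_n^{k_n}$, stabilize the ascending chain $V_0\subseteq V_1\subseteq\cdots$ at $n_0$, and then bound the finitely many witnessing indices $k_0,\dots,k_{n_0}$. Both reductions produce the same cutoff phenomenon: beyond the finite "height" ($k_1$ in the paper, $n_0$ in yours) every entry collapses to a single open set, and below it only finitely many rows need to be stabilized. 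Your route makes the limit objects $V_n$ explicit, which some may find more transparent; the paper's diagonal trick is marginally more compact. Either is a complete and correct proof.
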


\begin{proof}
There is an integer $k_1$ such that $U_t^t=U_{k_1}^{k_1}$ for all $t\ge k_1$ since $R$ is noetherian.
Also, there is an integer $k_2$ such that $U_n^t=U_n^{k_2}$ for all $t\ge k_2$ and all $0\le n< k_1$.
Put $k={\rm max}\{k_1,k_2\}$.
For all $t\ge k$ and all $n\ge 0$, we have $U_n^t=U_n^k$ by considering the case $0\le n< k_1$ and the case $k_1\le n$ separately.
\end{proof}

\section{The openness of the codepth loci of graded modules}

In this section, we study the openness of the codepth loci of graded modules.
The purpose of this section is to give a sufficient condition for the depths of localizations of homogeneous components of a graded module to be eventually stable.
As in the previous section, we assume in this section that $A=\bigoplus_{i\ge 0}A_i$ is a graded ring and that $M=\bigoplus_{i\in \Z}M_i$ is a finitely generated graded $A$-module.

Below is a well known result of Grothendieck \cite[(6.11.5)]{G}.

\begin{lem}\label{f.g. gene closed}
Let $(R,\m)$ be a local ring, $\p$ a prime ideal of $R$, and $N$ a finitely generated $R$-module.
Then we have 
$$
\codepth_{R_\p}(N_\p) \le \codepth_{R}(N).
$$
\end{lem}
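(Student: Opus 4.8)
The plan is to reduce the inequality $\codepth_{R_\p}(N_\p)\le\codepth_R(N)$ to a statement about depths along a maximal $N$-regular sequence, by inducting on $\depth_R(N)$. First I would dispose of the degenerate cases: if $N=0$ then $N_\p=0$ and both sides are $-\infty$; if $\p\notin\supp_R(N)$ then $N_\p=0$ and the left side is $-\infty$, so the inequality is trivial. Hence I may assume $N\neq 0$ and $\p\in\supp_R(N)=\V_R(\ann_R N)$.

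The base case is $\depth_R(N)=0$. Then $\m\in\ass_R(N)$, so there is a nonzero element $x\in N$ killed by $\m$. If $\depth_{R_\p}(N_\p)>0$, pick $y\in\p R_\p$ that is $N_\p$-regular; but $y$ kills the localization $(Rx)_\p=R_\p x$ (since $\m$, hence $\p$, kills $x$), and $R_\p x\neq 0$ because $\p\supseteq\ann_R N$ and $Rx\cong R/\m$... here one must be a little careful: $R_\p x$ could vanish if $\m\not\subseteq\p$, i.e. if $\p\neq\m$. So instead I will argue with dimensions directly: $\dim_{R_\p}(N_\p)=\height(\p/\ann_R N)$ by the analogue of Lemma \ref{graded lemma}(1) for ordinary modules, while $\dim_R(N)=\dim(R/\ann_R N)$, and $\depth_R(N)=0$ forces $\dim_R(N)\ge\dim_{R_\p}(N_\p)$ trivially if $\depth_{R_\p}(N_\p)=0$; if $\depth_{R_\p}(N_\p)>0$ the induction below handles it, so the genuine content is the inductive step.

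For the inductive step, suppose $\depth_R(N)=d>0$ and choose an $N$-regular element $t\in\m$; then $\depth_R(N/tN)=d-1$ and $\dim_R(N/tN)=\dim_R(N)-1$, so $\codepth_R(N/tN)=\codepth_R(N)$. Now localize at $\p$. If $t\in\p$ and $t$ happens to remain $N_\p$-regular, then $\codepth_{R_\p}(N_\p)=\codepth_{R_\p}((N/tN)_\p)$ and the induction hypothesis applied to $N/tN$ finishes the argument. The delicate point — and the main obstacle — is that a prime $\p$ need not contain $t$, and even if it does, $t/1$ need not be $N_\p$-regular, so one cannot simply pass to $N/tN$ over $R_\p$. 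To handle this I would instead compare $\codepth$ directly: set $c=\codepth_{R_\p}(N_\p)$ and argue by choosing a maximal $N_\p$-regular sequence in $\p R_\p$ and, via the prime-avoidance/spreading-out technique of Lemma \ref{graded lemma}(2), lifting it to an $N_f$-regular sequence for some $f\notin\p$, then extending it (using $\depth_R(N)=d$ and the fact that after killing a regular sequence the maximal ideal consists of zerodivisors) to control $\depth_R(N)$ from below and $\dim$ from above in a compatible way. In short, the inequality is equivalent to: along a maximal $N_\p$-regular sequence, the drop in dimension is at least the drop in depth, which propagates from the corresponding global statement because regular elements localize to (possibly non-regular, but dimension-nonincreasing) elements. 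The technical heart is bookkeeping of these dimension inequalities under localization of a non-finitely-generated-over-$R_\p$... no — here $N$ is finitely generated, so $N_\p$ is finitely generated over $R_\p$ and standard local-algebra tools apply; the real subtlety is only that regularity is not preserved, which is exactly why one works with $\codepth=\dim-\depth$ rather than with $\depth$ alone. I expect this compatibility of the dimension and depth drops to be the step requiring the most care; everything else is a routine dévissage.
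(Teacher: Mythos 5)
There is a genuine gap. Your plan correctly isolates the difficulty---after killing a regular sequence in $\p$ one must relate the remaining depth of the quotient to a dimension bound---but you never supply the ingredient that closes this gap, and you explicitly acknowledge this (``I expect this compatibility of the dimension and depth drops to be the step requiring the most care''). The paper's proof needs no d\'evissage at all. Pick a \emph{maximal $N$-regular sequence $\bm{x}=x_1,\ldots,x_n$ in $\p$} (then $n\le\depth_{R_\p}(N_\p)$, since $\bm{x}/1$ is $N_\p$-regular by exactness of localization). By maximality, $\p$ consists of zero-divisors on $N/\bm{x}N$, so by prime avoidance $\p\subseteq\q$ for some $\q\in\ass_R(N/\bm{x}N)$. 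The missing tool is the classical inequality $\depth_R(L)\le\dim(R/\q)$ for any $\q\in\ass_R(L)$ and any finitely generated module $L$ over a local ring (Matsumura, Theorem 17.2). Applying it to $L=N/\bm{x}N$ gives
$$\depth_R(N)-n=\depth_R(N/\bm{x}N)\le\dim(R/\q)\le\dim(R/\p)\le\dim_R(N)-\dim_{R_\p}(N_\p),$$
which together with $n\le\depth_{R_\p}(N_\p)$ rearranges to $\codepth_{R_\p}(N_\p)\le\codepth_R(N)$. Without this inequality your inductive scheme cannot be closed.

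Two local corrections to your plan. First, the worry that ``$t/1$ need not be $N_\p$-regular'' when $t\in\p$ is $N$-regular is unfounded: injectivity of multiplication by $t$ is preserved under localization, and $t/1$ lies in $\p R_\p$, so $t/1$ is $N_\p$-regular; the genuine obstruction is only that an $N$-regular element of $\m$ need not lie in $\p$ at all. Second, the object you want is a maximal $N$-regular sequence \emph{in $\p$} (a $\grade(\p,N)$-sequence), not a maximal $N_\p$-regular sequence in $\p R_\p$ lifted via Lemma \ref{graded lemma}(2): replacing $N$ by $N_f$ for $f\notin\p$ changes the base to the non-local ring $R_f$, so the quantity $\codepth_R(N)$ that you must compare against---which depends on the closed point $\m$---is lost. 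The spreading-out technique is the right tool for Lemma \ref{gene closed}, where both primes are allowed to move, but not here.
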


\begin{proof}
We may assume that $\p$ belongs to $\supp_R(N)$.
Let $\bm{x}=x_1,\ldots,x_n$ be a maximal $N$-regular sequence in $\p$.
There exists an associated prime ideal $\q$ of $N/\bm{x}N$ containing $\p$.
By \cite[Theorem 17.2]{Mat}, we obtain
\begin{align*}
\depth_R(N)-\depth_{R_\p}(N_\p)&\le \depth_R(N)-n= \depth_R(N/\bm{x}N)\le \dim(R/\q) \\
&\le \dim(R/\p)\le \dim_R(N)-\dim_{R_\p}(N_\p).
\end{align*}
This says that the assertion holds.
\end{proof}

Lemma \ref{f.g. gene closed} can be extended as follows.
The proof of \cite[Lemma 2.5]{RS}, which states a similar fact to below follows the ideas of Grothendieck's proof given in \cite{G}.
Our proof is simpler than that.

\begin{lem}\label{gene closed}
Let $\p$ and $\q$ be prime ideals of $A_0$ with $\p\subseteq\q$.
Then we have 
$$
\codepth_{(A_0)_\p}(M_\p) \le \codepth_{(A_0)_\q}(M_\q).
$$
\end{lem}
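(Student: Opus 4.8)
The plan is first to reduce to the case in which $A_0$ is local with maximal ideal $\q$, and then to bound $\codepth_{(A_0)_\p}(M_\p)$ by combining an upper bound for $\dim_{(A_0)_\p}(M_\p)$ with a lower bound for $\depth_{(A_0)_\p}(M_\p)$; the latter will be obtained by applying the finitely generated case, Lemma \ref{f.g. gene closed}, to a single carefully chosen homogeneous component of $M$.

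First I would localize at $\q$: replacing $A_0$, $A$ and $M$ by $(A_0)_\q$, $A_\q$ and $M_\q$, we may assume $(A_0,\q)$ is local, since this leaves the left-hand side unchanged (as $((A_0)_\q)_{\p(A_0)_\q}=(A_0)_\p$ and $(M_\q)_{\p(A_0)_\q}=M_\p$) while the right-hand side becomes $\codepth_{A_0}(M)$. Put $I=\ann_{A_0}(M)$. If $\p\notin\V_{A_0}(I)=\supp_{A_0}(M)$, then $M_\p=0$ and there is nothing to prove, so assume $\p\in\V_{A_0}(I)$. For the dimension side, Lemma \ref{graded lemma}(1) gives $\dim_{(A_0)_\p}(M_\p)=\height(\p/I)$, and since $A_0/I$ is local with $\dim(A_0/I)=\dim_{A_0}(M)$ we obtain
$$
\dim_{(A_0)_\p}(M_\p)=\height(\p/I)\le\dim_{A_0}(M)-\dim(A_0/\p).
$$

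The core of the argument is the inequality
$$
\depth_{A_0}(M)\le\dim(A_0/\p)+\depth_{(A_0)_\p}(M_\p).\qquad(\star)
$$
To prove $(\star)$, I would apply Lemma \ref{local lemma}(2) to the graded $A_\p$-module $M_\p$ over the local ring $(A_0)_\p$ to choose $i\in\Z$ with $\depth_{(A_0)_\p}(M_\p)=\depth_{(A_0)_\p}((M_i)_\p)$ (the infimum there is finite, hence attained, because $M_\p\ne0$); then $(M_i)_\p\ne0$, so $M_i$ is a nonzero finitely generated $A_0$-module with $\p\in\supp_{A_0}(M_i)$. By Lemma \ref{local lemma}(2) over $A_0$ we also have $\depth_{A_0}(M)\le\depth_{A_0}(M_i)$. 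Now the chain of inequalities in the proof of Lemma \ref{f.g. gene closed}, applied to the finitely generated $A_0$-module $M_i$ at the prime $\p$, gives $\depth_{A_0}(M_i)-\depth_{(A_0)_\p}((M_i)_\p)\le\dim(A_0/\p)$; combining this with $\depth_{A_0}(M)\le\depth_{A_0}(M_i)$ and $\depth_{(A_0)_\p}((M_i)_\p)=\depth_{(A_0)_\p}(M_\p)$ yields $(\star)$.

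Putting the two estimates together,
$$
\codepth_{(A_0)_\p}(M_\p)=\dim_{(A_0)_\p}(M_\p)-\depth_{(A_0)_\p}(M_\p)\le\dim_{A_0}(M)-\dim(A_0/\p)-\depth_{(A_0)_\p}(M_\p),
$$
and by $(\star)$ the right-hand side is at most $\dim_{A_0}(M)-\depth_{A_0}(M)=\codepth_{A_0}(M)$, which is the desired conclusion. I expect the main obstacle to be $(\star)$: since $M$ need not be finitely generated over $A_0$, Lemma \ref{f.g. gene closed} cannot be applied to $M$ itself, and the homogeneous components realizing $\dim_{A_0}(M)$ and $\depth_{A_0}(M)$ may be different; the content of the proof is the observation that, to bound the drop in depth, it is enough to control the single component on which $\depth_{(A_0)_\p}(M_\p)$ is attained, after which the classical finitely generated estimate (essentially Grothendieck's) does the rest.
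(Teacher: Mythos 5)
Your proof is correct, but it follows a different route from the paper's. The paper picks four homogeneous components $M_i, M_j, M_k, M_l$ realizing $\dim_{(A_0)_\p}(M_\p)$, $\depth_{(A_0)_\p}(M_\p)$, $\dim_{(A_0)_\q}(M_\q)$, $\depth_{(A_0)_\q}(M_\q)$ via Lemma \ref{local lemma}, forms the finitely generated $A_0$-module $N=M_i\oplus M_j\oplus M_k\oplus M_l$, checks that $\codepth_{(A_0)_\p}(N_\p)=\codepth_{(A_0)_\p}(M_\p)$ and $\codepth_{(A_0)_\q}(N_\q)=\codepth_{(A_0)_\q}(M_\q)$, and then applies Lemma \ref{f.g. gene closed} to $N$ as a black box. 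You instead localize at $\q$ first, then decouple the two halves of codepth: the dimension estimate $\dim_{(A_0)_\p}(M_\p)=\height(\p/I)\le\dim_{A_0}(M)-\dim(A_0/\p)$ comes purely from Lemma \ref{graded lemma}(1) and the standard height inequality in a local ring, while the depth drop $\depth_{A_0}(M)-\depth_{(A_0)_\p}(M_\p)\le\dim(A_0/\p)$ is obtained by applying (the intermediate step of) Lemma \ref{f.g. gene closed} to the single component $M_i$ realizing the local depth at $\p$, together with $\depth_{A_0}(M)\le\depth_{A_0}(M_i)$ from Lemma \ref{local lemma}(2). Your argument is somewhat more transparent in that it exposes exactly where the finitely generated case is used (only for depth, only for one component, and not on the nose but through the chain of inequalities inside its proof) and it avoids needing to choose dimension-realizing components at all; the paper's version is more compact notationally and treats Lemma \ref{f.g. gene closed} entirely as a black box. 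Both are valid, and both ultimately rest on the same estimate $\depth_R(N)-\depth_{R_\p}(N_\p)\le\dim(R/\p)\le\dim_R(N)-\dim_{R_\p}(N_\p)$ for finitely generated $N$.
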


\begin{proof}
By (1) and (2) of Lemma \ref{local lemma}, we can take integers $i, j, k, l \in\Z$ such that 
\begin{align*}
&\dim_{(A_0)_\p}(M_\p)=\dim_{(A_0)_\p}(M_i)_\p, \ 
\depth_{(A_0)_\p}(M_\p)=\depth_{(A_0)_\p}(M_j)_\p, \\
&\dim_{(A_0)_\q}(M_\q)=\dim_{(A_0)_\q}(M_k)_\q,\ {\rm and} \ 
\depth_{(A_0)_\q}(M_\q)=\depth_{(A_0)_\q}(M_l)_\q.
\end{align*}
It follows from Lemma \ref{f.g. gene closed} that
$$
\codepth_{(A_0)_\p}(M_\p) = \codepth_{(A_0)_\q}(N_\p) \le 
\codepth_{(A_0)_\p}(N_\q) = \codepth_{(A_0)_\q}(M_\q)
$$
as $N:=M_i\oplus M_j\oplus M_k\oplus M_l$ is a finitely generated $A_0$-module.
\end{proof}

We consider the openness of the codepth loci of a graded module to state the main result of this paper.
The following theorem is a graded version of \cite[Theorem 5.4]{K}.

\begin{thm}\label{cm2}
Let $\p\in\cm_{A_0}(M)$.
If $\cm(A_0/\p)$ contains a nonempty open subset of $\spec (A_0/\p)$, then $\cm_{A_0}(M)$ contains a nonempty open subset of $\V_{A_0}(\p)$.
\end{thm}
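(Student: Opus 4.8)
The plan is to imitate the non-graded argument of \cite[Theorem 5.4]{K}: kill a maximal regular sequence to reduce to a module that is zero-dimensional at $\p$, and then run a generic-freeness argument over the domain $A_0/\p$. If $\p\notin\supp_{A_0}(M)$ there is nothing to prove, since Remark \ref{supp} already exhibits a nonempty open subset of $\V_{A_0}(\p)$ contained in $\cm_{A_0}(M)=\C_0^{A_0}(M)$; so assume $\p\in\supp_{A_0}(M)$ and set $d=\dim_{(A_0)_\p}(M_\p)$. Since $\p\in\cm_{A_0}(M)$ and the inequality $\depth\le\dim$ always holds (by Lemma \ref{local lemma} together with the finitely generated case), we have $\depth_{(A_0)_\p}(M_\p)=d$; hence there is a maximal $M_\p$-regular sequence of the form $x_1/1,\dots,x_d/1$ with $x_1,\dots,x_d\in\p$, and by Lemma \ref{graded lemma}(2) there is $f\in A_0\setminus\p$ such that $\bm x=x_1,\dots,x_d$ is an $M_f$-regular sequence. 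Put $L=M/\bm x M$. For every prime $\q$ of $A_0$ with $\p\subseteq\q$ and $f\notin\q$ the sequence $\bm x$ is $M_\q$-regular, so by Lemma \ref{local lemma}(4) we have $\q\in\cm_{A_0}(M)$ if and only if $\q\in\cm_{A_0}(L)$; moreover $\dim_{(A_0)_\p}(L_\p)=\depth_{(A_0)_\p}(L_\p)=0$, and therefore by Lemma \ref{graded lemma}(1) $\p$ is a minimal prime of $\ann_{A_0}(L)$.

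Next I would localize away the other components of $L$. Every minimal prime of $\ann_{A_0}(L)$ different from $\p$ is incomparable with $\p$; picking an element of each such prime lying outside $\p$ and multiplying them, I obtain $g\in A_0\setminus\p$ with the property that a prime of $A_0$ not containing $g$ belongs to $\supp_{A_0}(L)$ if and only if it contains $\p$. Consequently the radical of $\ann_{A_0}(L)$ becomes $\p$ after $g$ is inverted, so $\p^{N}L$ is annihilated once $g$ is inverted, for some integer $N$. Now consider the finite chain of graded submodules $L\supseteq\p L\supseteq\p^{2}L\supseteq\cdots\supseteq\p^{N}L$, which terminates at $0$ once $g$ is inverted; its successive quotients $G_t=\p^{t}L/\p^{t+1}L$ for $0\le t<N$ are finitely generated graded modules over $\bar A:=A/\p A$, so their graded components are finitely generated modules over the domain $\bar A_0:=A_0/\p$. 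By the graded version of the generic freeness lemma, for each $t$ there is a nonzero $\bar h_t\in\bar A_0$ such that every graded component of $G_t$ becomes a free $\bar A_0$-module once $\bar h_t$ is inverted. Let $W_0$ be the given nonempty open subset of $\spec(\bar A_0)$ contained in $\cm(\bar A_0)$, and let $\bar f,\bar g$ be the nonzero images of $f,g$ in $\bar A_0$. The set of primes of $\bar A_0$ that lie in $W_0$ and contain none of $\bar f,\bar g,\bar h_0,\dots,\bar h_{N-1}$ is a nonempty open subset of the irreducible space $\spec(\bar A_0)$; let $U\subseteq\V_{A_0}(\p)$ be the corresponding nonempty open subset.

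It then remains to verify $U\subseteq\cm_{A_0}(M)$; equivalently, $\q\in\cm_{A_0}(L)$ for all $\q\in U$. Fix such a $\q$ and write $\bar\q=\q/\p$. Since $g\notin\q$, Lemma \ref{graded lemma}(1) gives $\dim_{(A_0)_\q}(L_\q)=\height(\q/\p)=\dim((\bar A_0)_{\bar\q})$, and this equals $\depth((\bar A_0)_{\bar\q})$ because $\bar\q\in W_0\subseteq\cm(\bar A_0)$. For each $t$ the module $(G_t)_\q$ is annihilated by $\p$, so its depth over $(A_0)_\q$ equals its depth over $(\bar A_0)_{\bar\q}$; since $\bar h_t\notin\bar\q$, each of its graded components is a free $(\bar A_0)_{\bar\q}$-module, whence Lemma \ref{local lemma}(2) yields $\depth_{(A_0)_\q}((G_t)_\q)\ge\depth((\bar A_0)_{\bar\q})=\height(\q/\p)$, with the usual convention $\depth(0)=\infty$. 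Applying Lemma \ref{local lemma}(3) repeatedly along the localized filtration $0=\p^{N}L_\q\subseteq\cdots\subseteq\p L_\q\subseteq L_\q$ gives $\depth_{(A_0)_\q}(L_\q)\ge\min_{0\le t<N}\depth_{(A_0)_\q}((G_t)_\q)\ge\height(\q/\p)=\dim_{(A_0)_\q}(L_\q)$. As $\depth\le\dim$ always, $L_\q$ is Cohen--Macaulay over $(A_0)_\q$, that is, $\q\in\cm_{A_0}(L)$, as desired.

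The main obstacle is engineering the reduction so that generic freeness becomes applicable: killing a maximal regular sequence is what forces $L$ to be zero-dimensional at $\p$, and this (after one further localization) is what makes a power of $\p$ annihilate $L$, so that the $\p$-adic filtration is finite with quotients that are honest modules over the domain $A_0/\p$. A second point requiring care is the equality $\dim_{(A_0)_\q}(L_\q)=\height(\q/\p)$, since it is precisely this, combined with the generic-freeness lower bound on depth and the Cohen--Macaulayness of $(A_0/\p)_\q$, that pins the depth to the dimension. Finally, it is essential to keep everything graded throughout, so that the componentwise statements of Lemma \ref{local lemma} and the descent of regular sequences in Lemma \ref{graded lemma} remain available.
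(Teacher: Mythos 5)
Your proof is correct and follows essentially the same strategy as the paper's: kill a maximal $M$-regular sequence to obtain a module $L$ that is zero-dimensional at $\p$, localize so that a power of $\p$ annihilates $L$, apply graded generic freeness to the successive quotients of the $\p$-adic filtration, and use the Cohen--Macaulayness of $(A_0/\p)_{\bar\q}$ together with Lemma~\ref{local lemma} to bound $\depth_{(A_0)_\q}(L_\q)$ from below by $\height(\q/\p)=\dim_{(A_0)_\q}(L_\q)$. The only cosmetic difference is that the paper arranges $\height(\bm{x}A_0)=d$ by prime avoidance (so $\p$ is automatically minimal over $\bm{x}A_0$) and phrases the successive localizations as ``we may assume,'' whereas you deduce minimality of $\p$ over $\ann_{A_0}(L)$ from $\dim_{(A_0)_\p}(L_\p)=0$ and track the inverted elements $f,g,\bar h_t$ explicitly; these are equivalent ways of reaching the same configuration.
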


\begin{proof}
First of all, we may assume that $\p$ belongs to $\supp_{A_0}(M)$ by Remark \ref{supp}.
Also, note that we may assume $\supp_{A_0}(M)=\spec(A_0)$ by replacing $A$ with $A/\operatorname{Ann}_{A} (M)$, and can freely replace our ring $A$ with its localization $A_f$ for any element $f\in A_0\setminus\p$ to prove the theorem; see \cite[Lemmas 2.5 and  2.6]{K}.

Put $d=\depth_{(A_0)_\p}(M_\p)=\dim_{(A_0)_\p}(M_\p)=\height\p$.
We can choose a sequence $\bm{x}=x_1,\ldots,x_d$ in $\p$ such that it is an $M_\p$-regular sequence and $\height\bm{x}A_0=d$ by Lemmas  \ref{RS1.3}, \ref{local lemma} (4), and \ref{graded lemma} (3).
We may assume that $A_0/\p$ is Cohen--Macaulay and that $\p^r$ is contained in $\bm{x}A_0$ for some $r>0$.
Also, Lemma \ref{graded lemma} (2) yields that we may assume that $\bm{x}$ is an $M$-regular sequence.
Set $\overline{A}=A/\bm{x} A$, $\overline{\p}=\p\overline{A}$, and $\overline{M}=M/\bm{x} M$.
Thanks to \cite[Theorem 24.1]{Mat}, for each $0\le i\le r-1$, we may assume that the graded $A$-module $\overline{\p}^{i}\overline{M}/\overline{\p}^{i+1}\overline{M}$ is free as an $A_0/\p$-module.

We claim that $\cm_{A_0}(M)$ contains $\V_{A_0}(\p)$.
Let $\q$ be a prime ideal of $A_0$ containing $\p$.
We obtain 
$$
\depth_{(A_0)_\q} (\overline{\p}^{i}\overline{M}/\overline{\p}^{i+1}\overline{M})_\q=\depth (A_0/\p)_\q=\height(\q/\p)=\height(\q/\bm{x} A_0)
$$
for each $0\le i\le r-1$.
It follows from (3) and (4) of Lemma \ref{local lemma} that
$$
\depth_{(A_0)_\q}(M_\q)=\depth_{(A_0)_\q}(\overline{M}_\q)+d \ge \height(\q/\bm{x} A_0)+d=\height\q.
$$
Hence, $\q$ belongs to  $\cm_{A_0}(M)$.
\end{proof}

Below is a direct corollary of Theorem \ref{cm2}.
It is a graded version of \cite[Corollary 5.5 (1)]{K}.

\begin{cor}\label{cm openness}
Suppose that $\cm(A_0/\p)$ contains a nonempty open subset of $\spec (A_0/\p)$ for any $\p\in\supp_{A_0}(M)\cap\cm_{A_0}(M)$.
Then $\cm_{A_0}(M)$ is an open subset of $\spec(A_0)$.
\end{cor}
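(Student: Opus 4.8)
The plan is to obtain this as a formal consequence of Theorem \ref{cm2}, mirroring the way \cite[Corollary 5.5 (1)]{K} follows from \cite[Theorem 5.4]{K}. The main ingredient is the standard criterion for openness of a subset of the spectrum of a noetherian ring: a subset $U\subseteq\spec(A_0)$ is open if and only if (a) $U$ is stable under generization, and (b) for every $\p\in U$ the intersection $U\cap\V_{A_0}(\p)$ contains a nonempty open subset of $\V_{A_0}(\p)$. I would apply this with $U=\cm_{A_0}(M)=\C_0^{A_0}(M)$.

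For condition (a): given primes $\q\subseteq\p$ of $A_0$ with $\p\in\cm_{A_0}(M)$, Lemma \ref{gene closed} gives $\codepth_{(A_0)_\q}(M_\q)\le\codepth_{(A_0)_\p}(M_\p)\le 0$, so $\q\in\cm_{A_0}(M)$; the case $M_\p=0$ causes no trouble, since then the codepth is $-\infty$.

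For condition (b): fix $\p\in\cm_{A_0}(M)$ and distinguish two cases. If $\p\notin\supp_{A_0}(M)$, then by Remark \ref{supp} the set $(\spec(A_0)\setminus\supp_{A_0}(M))\cap\V_{A_0}(\p)$ is a nonempty open subset of $\V_{A_0}(\p)$ contained in $\C_0^{A_0}(M)=\cm_{A_0}(M)$. If instead $\p\in\supp_{A_0}(M)\cap\cm_{A_0}(M)$, then by hypothesis $\cm(A_0/\p)$ contains a nonempty open subset of $\spec(A_0/\p)$, and Theorem \ref{cm2} applies (its hypothesis $\p\in\cm_{A_0}(M)$ holds) to yield that $\cm_{A_0}(M)$ contains a nonempty open subset of $\V_{A_0}(\p)$, using the canonical homeomorphism $\V_{A_0}(\p)\cong\spec(A_0/\p)$. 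In either case (b) holds, and the criterion gives that $\cm_{A_0}(M)$ is open.

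I do not anticipate a real obstacle here: once Theorem \ref{cm2} is in hand the corollary is essentially bookkeeping, and the only points requiring a little care are the invocation of the openness criterion and the separation of the locus $\spec(A_0)\setminus\supp_{A_0}(M)$ on which $M$ localizes to zero. If one prefers not to quote the criterion, it can be replaced by a short direct argument: set $Z=\spec(A_0)\setminus\cm_{A_0}(M)$, suppose for contradiction that $\overline{Z}$ meets $\cm_{A_0}(M)$, and pass to an irreducible component $V$ of $\overline{Z}$ through such a point; the generic point of $V$ lies in $\cm_{A_0}(M)$ by (a), so by (b) a nonempty open subset of $V$ lies in $\cm_{A_0}(M)$, contradicting the density of $Z\cap V$ in $V$; hence $Z=\overline{Z}$ is closed.
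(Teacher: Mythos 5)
Your proof is correct and matches the paper's argument: the paper cites \cite[Theorem 24.2]{Mat} (the openness criterion you invoke), Remark \ref{supp}, Lemma \ref{gene closed}, and Theorem \ref{cm2}, which are exactly the ingredients you assemble for conditions (a) and (b). Your write-up simply unpacks the one-line citation into full detail.
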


\begin{proof}
The assertions follow from \cite[Theorem 24.2]{Mat}, Remark \ref{supp}, Lemma \ref{gene closed} and Theorem \ref{cm2}.
\end{proof}

When the base ring of $A/\operatorname{Ann}_{A} (M)$ is catenary, Theorem \ref{cm2} can be extended as follows.

\begin{thm}\label{codepth}
Let $n\ge 0$ be an integer and let $\p\in\C_n^{A_0}(M)$.
Suppose that that the ring $A_0/\operatorname{Ann}_{A_0} (M)$ is catenary.
If $\cm(A_0/\p)$ contains a nonempty open subset of $\spec (A_0/\p)$, then $\C_n^{A_0}(M)$ contains a nonempty open subset of $\V_{A_0}(\p)$.
\end{thm}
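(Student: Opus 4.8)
The plan is to follow the proof of Theorem~\ref{cm2}, the new feature being that after cutting by a maximal regular sequence the $\p$-adic filtration of the resulting module no longer terminates. First I would carry out the standard reductions (cf.\ the proof of Theorem~\ref{cm2} and \cite[Lemmas 2.5 and 2.6]{K}): by Remark~\ref{supp} we may assume $\p\in\supp_{A_0}(M)$; replacing $A$ by $A/\operatorname{Ann}_A(M)$ we may assume $M$ is faithful over $A_0$, so that $A_0$ is catenary and $\supp_{A_0}(M)=\spec(A_0)$; and we may freely replace $A$ by $A_f$ for any $f\in A_0\setminus\p$. Inverting an element of $A_0\setminus\p$ lying in every minimal prime of $A_0$ not contained in $\p$, we may moreover assume every minimal prime of $A_0$ is contained in $\p$; since $A_0$ is catenary this gives $\height\q=\height(\q/\p)+\height\p$ for all $\q\in\V_{A_0}(\p)$, while $\dim_{(A_0)_\q}(M_\q)=\height\q$ by Lemma~\ref{graded lemma}(1). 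Set $c=\codepth_{(A_0)_\p}(M_\p)\le n$ and $d=\depth_{(A_0)_\p}(M_\p)=\height\p-c$.

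As in Theorem~\ref{cm2} (using Lemmas~\ref{RS1.3}, \ref{local lemma}(4) and \ref{graded lemma}(3)) I would choose $\bm x=x_1,\dots,x_d\in\p$ a maximal $M_\p$-regular sequence, and by Lemma~\ref{graded lemma}(2) localize so that $\bm x$ is $M$-regular; put $N=M/\bm xM$. Since $\dim_{(A_0)_\p}(N_\p)=c$ may be positive, I cannot apply \cite[Theorem 24.1]{Mat} to finitely many pieces of the $\p$-adic filtration of $N$; instead I apply it to the whole associated graded module $\operatorname{gr}_\p(N)=\bigoplus_{i\ge0}\p^iN/\p^{i+1}N$, a finitely generated graded module over the finitely generated graded $A_0/\p$-algebra $\operatorname{gr}_\p(A_0)$, to arrange (after a further localization) that $\operatorname{gr}_\p(N)$ is $A_0/\p$-free with a homogeneous basis, so that every $\p^iN/\p^{i+1}N$ and every homogeneous component $\p^iN_m/\p^{i+1}N_m$ is $A_0/\p$-free. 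Using the hypothesis, localize once more so that $A_0/\p$ is Cohen--Macaulay. As in Theorem~\ref{cm2}, it then suffices to prove $\V_{A_0}(\p)\subseteq\C_n^{A_0}(M)$.

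The heart of the argument is a claim that replaces the iteration of Lemma~\ref{local lemma}(3) available when $c=0$: if $(R,\m)$ is local, $J\subseteq\m$ an ideal with $R/J$ Cohen--Macaulay, and $W$ is a finitely generated $R$-module with $J^iW/J^{i+1}W$ free over $R/J$ for all $i$, then $\depth_RW\ge\depth(R/J)$. I would prove this by induction on $\depth(R/J)$: if it is positive, pick $t\in\m\setminus J$ a nonzerodivisor on $R/J$; then $t$ is a nonzerodivisor on each $J^iW/J^{i+1}W$, hence — using $\bigcap_iJ^iW=0$ (Krull) — on $W$, and its image in $\operatorname{gr}_J(R)_0=R/J$ is a nonzerodivisor on $\operatorname{gr}_J(W)$, whence a short computation gives $J^i(W/tW)/J^{i+1}(W/tW)\cong(R/(J+tR))^{b_i}$, free over the Cohen--Macaulay ring $R/(J+tR)$ of depth $\depth(R/J)-1$; by induction $\depth_R(W/tW)\ge\depth(R/J)-1$, so $\depth_RW=\depth_R(W/tW)+1\ge\depth(R/J)$. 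Applying this with $R=(A_0)_\q$, $J=\p(A_0)_\q$ and $W=(N_m)_\q$ for each $m$, and taking the infimum over $m$ via Lemma~\ref{local lemma}(2), gives $\depth_{(A_0)_\q}(N_\q)\ge\depth((A_0/\p)_\q)=\height(\q/\p)$. Since $\bm x$ is $M_\q$-regular, Lemma~\ref{local lemma}(4) gives $\dim_{(A_0)_\q}(N_\q)=\height\q-d=\height(\q/\p)+c$, so $\codepth_{(A_0)_\q}(M_\q)=\codepth_{(A_0)_\q}(N_\q)=\dim_{(A_0)_\q}(N_\q)-\depth_{(A_0)_\q}(N_\q)\le c\le n$, i.e.\ $\q\in\C_n^{A_0}(M)$; translating back through the localizations, $\C_n^{A_0}(M)$ contains a nonempty open subset of $\V_{A_0}(\p)$.

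I expect the claim about filtered modules to be the main obstacle: for $c>0$ the $\p$-adic filtration of $N$ does not terminate, so the finite-filtration argument of Theorem~\ref{cm2} is unavailable and one must instead pass to the associated graded module (for generic freeness) and run the Krull-intersection / leading-form induction above. The only other delicate point is the identity $\height\q=\height(\q/\p)+\height\p$, which is precisely where the catenary hypothesis — after reducing to the case where all minimal primes of $A_0$ lie under $\p$ — is used.
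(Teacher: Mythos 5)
Your proof is correct, but it takes a genuinely different route from the paper's. The paper proves Theorem~\ref{codepth} by induction on $n$: after cutting by a maximal $M_\p$-regular sequence to get $N=M/\bm{x}M$, it splits $N$ via the short exact sequence $0\to\Gamma_\p(N)\to N\to N/\Gamma_\p(N)\to 0$, observes that $\p\in\C_0^{A_0}(\Gamma_\p(N))$ and $\p\in\C_{n-1}^{A_0}(N/\Gamma_\p(N))$, applies the induction hypothesis to both pieces, and combines them through Lemma~\ref{local lemma}(3) together with a catenary dimension count. You instead extend the generic-freeness strategy of Theorem~\ref{cm2} in one shot, with no induction on $n$: you replace the terminating filtration of Theorem~\ref{cm2} by the full $\p$-adic filtration of $N$ and prove a self-contained lemma (free $J$-adic subquotients over a Cohen--Macaulay $R/J$ force $\depth_R W\ge\depth(R/J)$) via a Krull-intersection/leading-form induction on $\depth(R/J)$; this is a clean and genuinely new ingredient, and the verification that $J'^i(W/tW)=J^i(W/tW)$ because $t$ kills $W/tW$ makes the induction close correctly. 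Your argument in fact gives the slightly sharper conclusion $\V_{A_0}(\p)\subseteq\C_c^{A_0}(M)$ with $c=\codepth_{(A_0)_\p}(M_\p)$ after localizing. Two small imprecisions: $\operatorname{gr}_\p(N)$ is a finitely generated graded module over $\operatorname{gr}_{\p A}(A)=\bigoplus_i\p^iA/\p^{i+1}A$ (a finitely generated $A_0/\p$-algebra), not over $\operatorname{gr}_\p(A_0)$ — $N$ is not finitely generated over $A_0$; and generic freeness over $A_0/\p$ need not produce a \emph{homogeneous} free basis, so you should phrase the reduction as generic flatness of $\operatorname{gr}_{\p A}(N)$ over $A_0/\p$, from which each bihomogeneous piece $(\p^iN_m/\p^{i+1}N_m)_\q$ is finitely generated flat over the local ring $(A_0/\p)_\q$, hence free, which is exactly what your Claim uses. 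With those corrections the proof goes through and the use of catenarity is placed in the same spot as in the paper.
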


\begin{proof}
In an analogous way as at the beginning of the proof of Theorem \ref{cm2}, we may assume $\p\in\supp_{A_0}(M)$ and can freely replace our ring $A$ with its localization $A_f$ for any element $f\in A_0\setminus\p$ to prove the theorem.

We prove the theorem by induction on $n$.
We have already shown the case where $n=0$ in Theorem \ref{cm2}.
Let $n>0$ and $d=\depth_{(A_0)_\p}(M_\p)$.
By the induction hypothesis, we may assume $\codepth_{(A_0)_\p}(M_\p)=n$.
Thanks to Lemma \ref{graded lemma} (2), we may assume that the following conditions are satisfied.
\begin{enumerate}[\rm (a)]
\item The prime ideal $\p$ contains any minimal prime ideal of $\operatorname{Ann}_{A_0} (M)$.
\item There is an $M$-regular sequence $\bm{x}=x_1,\ldots,x_d$ in $\p$.
\end{enumerate}
Set $N=M/\bm{x} M$. 
Note that the $\p$-torsion submodule $\Gamma_{\p}(N)$ of $N$ is finitely generated and graded as an $A$-module.
We easily see that $\supp_{A_0}(\Gamma_{\p}(N))=\V_{A_0}(\p)$; see Lemmas \ref{local lemma} (4) and \ref{graded lemma} (3).
Since 
$$
\dim_{(A_0)_\p}(\Gamma_{\p}(N))_\p=0<n=\codepth_{(A_0)_\p}(M_\p)=\codepth_{(A_0)_\p}(N_\p)=\dim_{(A_0)_\p}(N_\p),
$$
it is seen that $\p$ is in $\C_0^{A_0}(\Gamma_{\p}(N))$ and $\dim_{(A_0)_\p}(N/\Gamma_{\p}(N))_\p=\dim_{(A_0)_\p}(N_\p)=n$.
On the other hand, Lemma \ref{graded lemma} (3) implies $\depth_{(A_0)_\p}(N/\Gamma_{\p}(N))_\p>0$.
Thus $\p$ belongs to $\C_{n-1}^{A_0}(N/\Gamma_{\p}(N))$.
By the induction hypothesis, we may assume that the following condition (c) is satisfied.
\begin{enumerate}[\rm (c)]
\item The set $\V_{A_0}(\p)$ is contained in both $\C_0^{A_0}(\Gamma_{\p}(N))$ and $\C_{n-1}^{A_0}(N/\Gamma_{\p}(N))$.
\end{enumerate}

We prove that $\V_{A_0}(\p)$ is contained in $\C_n^{A_0}(M)$.
Let $\q$ be a prime ideal of $A_0$ containing $\p$.
Now the ring $A_0/\operatorname{Ann}_{A_0} (M)$ is catenary.
By (a) and (c), we have
\begin{align*}
\depth_{(A_0)_\q}(\Gamma_{\p}(N))_\q=\dim_{(A_0)_\q}(\Gamma_{\p}(N))_\q=\height(\q/\p)
&=\height(\q/\operatorname{Ann}_{A_0} (M))-\height(\p/\operatorname{Ann}_{A_0} (M))\\
&=\dim_{(A_0)_\q}(M_\q)-\dim_{(A_0)_\p}(M_\p) \\
&=\dim_{(A_0)_\q}(M_\q)-(n+d).
\end{align*}
Note that $\supp_{A_0}(\Gamma_{\p}(N))=\V_{A_0}(\p)$ is contained in $\supp_{A_0}(N/\Gamma_{\p}(N))$.
By (b) and (c), we get 
\begin{align*}
\depth_{(A_0)_\q}(N/\Gamma_{\p}(N))_\q \ge \dim_{(A_0)_\q}(N/\Gamma_{\p}(N))_\q-(n-1)
&=\dim_{(A_0)_\q}(N_\q)-(n-1) \\
&=(\dim_{(A_0)_\q}(M_\q)-d)-(n-1) \\
&>\dim_{(A_0)_\q}(M_\q)-(n+d).
\end{align*}
Therefore, we observe that 
\begin{align*}
\depth_{(A_0)_\q}(M_\q)=\depth_{(A_0)_\q}(N_\q)+d
&\ge {\rm min} \{\depth_{(A_0)_\q}(\Gamma_{\p}(N))_\q,\ \depth_{(A_0)_\q}(N/\Gamma_{\p}(N))_\q\}+d \\
&= \dim_{(A_0)_\q}(M_\q)-n
\end{align*}
by Lemma \ref{local lemma} (3), which means that $\q$ belongs to $\C_n^{A_0}(M)$.
\end{proof}

The same result as Corollary \ref{cm openness} holds for codepth loci.

\begin{cor}\label{codepth2}
Suppose that the ring $A_0/\operatorname{Ann}_{A_0} (M)$ is catenary.
\begin{enumerate}[\rm(1)]
\item 
Let $n\ge 0$ be an integer.
Suppose that $\cm(A_0/\p)$ contains a nonempty open subset of $\spec (A_0/\p)$ for any $\p\in\supp_{A_0}(M)\cap\C_n^{A_0}(M)$.
Then $\C_n^{A_0}(M)$ is open.
\item 
Suppose that $\cm(A_0/\p)$ contains a nonempty open subset of $\spec (A_0/\p)$ for any $\p\in\supp_{A_0}(M)$.
Then $\C_n^{A_0}(M)$ is open for any integer $n\ge 0$.
\end{enumerate}
\end{cor}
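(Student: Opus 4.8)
The plan is to argue exactly as in the proof of Corollary \ref{cm openness}, with Theorem \ref{codepth} playing the role that Theorem \ref{cm2} plays there and with $\C_n^{A_0}(M)$ in place of $\cm_{A_0}(M)=\C_0^{A_0}(M)$. The engine is the openness criterion \cite[Theorem 24.2]{Mat}: a subset $U\subseteq\spec(A_0)$ is open if and only if $U$ is stable under generization and, for every $\p\in U$, the set $U$ contains a nonempty open subset of $\V_{A_0}(\p)$.

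For part (1), fix $n\ge0$ and set $U=\C_n^{A_0}(M)$. First I would verify that $U$ is stable under generization: if $\q\in U$ and $\p\subseteq\q$, then Lemma \ref{gene closed} gives $\codepth_{(A_0)_\p}(M_\p)\le\codepth_{(A_0)_\q}(M_\q)\le n$, so $\p\in U$. Next, given $\p\in U$, I would split into two cases according to whether $\p\in\supp_{A_0}(M)$. If $\p\notin\supp_{A_0}(M)$, then Remark \ref{supp} already exhibits the nonempty open subset $(\spec(A_0)\setminus\supp_{A_0}(M))\cap\V_{A_0}(\p)$ of $\V_{A_0}(\p)$ inside $U$. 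If $\p\in\supp_{A_0}(M)\cap\C_n^{A_0}(M)$, then by hypothesis $\cm(A_0/\p)$ contains a nonempty open subset of $\spec(A_0/\p)$; since $A_0/\operatorname{Ann}_{A_0}(M)$ is catenary, Theorem \ref{codepth} applies and shows that $U$ contains a nonempty open subset of $\V_{A_0}(\p)$. Feeding these two facts into \cite[Theorem 24.2]{Mat} yields that $U=\C_n^{A_0}(M)$ is open.

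For part (2), I would simply observe that $\supp_{A_0}(M)\cap\C_n^{A_0}(M)\subseteq\supp_{A_0}(M)$, so the hypothesis of (2) implies the hypothesis of (1) for every $n\ge0$; hence (1) applies to each $n$ and gives openness of every $\C_n^{A_0}(M)$. I do not anticipate a genuine obstacle: all the real work has been pushed into Theorem \ref{codepth} (together with its base case Theorem \ref{cm2}) and into Lemma \ref{gene closed}. The only points requiring a little care are keeping straight the direction of generization when invoking Lemma \ref{gene closed}, and checking that the catenary and $\cm$-openness hypotheses are precisely the ones needed to apply Theorem \ref{codepth}; both amount to routine bookkeeping.
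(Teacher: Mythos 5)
Your proposal is correct and follows exactly the same route as the paper: stability under generization via Lemma \ref{gene closed}, the nonempty-open-subset condition via Remark \ref{supp} (off the support) and Theorem \ref{codepth} (on the support), combined through the topological Nagata criterion \cite[Theorem 24.2]{Mat}, with (2) reducing to (1). The paper's proof is just a one-line citation of these same four ingredients.
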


\begin{proof}
The assertions follow from \cite[Theorem 24.2]{Mat}, Remark \ref{supp}, Lemma \ref{gene closed} and Theorem \ref{codepth}.
\end{proof}

We study the asymptotic behavior of the depths of localizations of homogeneous components of a graded module.
We prepare the following basic lemma to state Lemma \ref{RS4.2}.

\begin{lem}\label{RS4.1}
Suppose that $A$ is homogeneous and that $(A_0, \m_0, k_0)$ is local.
Then there exists an integer $k$ such that $\depth_{A_0} (M_t)=\depth_{A_0} (M_k)$ for all integers $t\ge k$.
\end{lem}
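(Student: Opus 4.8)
The plan is to reduce the assertion to a statement about associated primes and their eventual stability. First I would invoke the homogeneity of $A$: there is an integer $l$ such that $M_{t+1}=A_1 M_t$ for all $t\ge l$, so the chain of ideals $\ann_{A_0}(M_t)$ is increasing for $t\ge l$ and stabilizes by Lemma \ref{RS1.1.1}. This lets us pass to a uniform annihilator, and after replacing $A$ by $A/\ann_A(M)$ we may assume $\supp_{A_0}(M_t)=\spec(A_0)$ for all large $t$, so in particular each $M_t$ with $t$ large is a nonzero finitely generated $A_0$-module.

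Next I would use the fact that $\depth_{A_0}(M_t)$ is controlled by whether $\m_0$ is an associated prime of $M_t$, and more generally by a prime-avoidance argument. The cleanest route: since $A_0$ is local with maximal ideal $\m_0$, we have $\depth_{A_0}(M_t)\ge n$ if and only if $\m_0$ does not lie in $\ass_{A_0}(M_t/\bm y M_t)$ for a suitable length-$(n{-}1)$ regular sequence $\bm y$; but it is simpler to argue inductively. I would first show that the sequence $\depth_{A_0}(M_t)$ is eventually monotone or at least eventually bounded below by a stable value, by choosing, for large $t$, a single element $x\in\m_0$ that is $M_t$-regular and $M_{t+1}$-regular simultaneously — this is possible because, by Brodmann-type finiteness (Lemma \ref{RS1.3}), $\bigcup_{i}\ass_{A_0}(M_i)$ is a finite set, so once $t$ is large the finitely many associated primes that can occur are fixed, and if $\depth_{A_0}(M_t)>0$ for large $t$ we can pick $x$ outside all of them. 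Then $M_{t+1}=A_1 M_t$ gives a surjection whose behavior lets us compare $M_t/xM_t$ with $M_{t+1}/xM_{t+1}$, reducing the depth by one on both sides via Lemma \ref{local lemma} and letting us induct on $\depth_{A_0}(M_t)$, which is bounded above by $\dim A_0$.

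The base case is where $\depth_{A_0}(M_t)=0$ for infinitely many large $t$, equivalently $\m_0\in\ass_{A_0}(M_t)$ for infinitely many $t$; here I would argue directly that this property is eventually stable. Using Lemma \ref{RS1.3}, $\m_0\in\ass_{A_0}(M_t)$ iff $\m_0\in\ass_A(M)\cap\{P:P\cap A_0=\m_0\}$ detects a graded prime $P$ of $A$ with $P\cap A_0=\m_0$ and $M_t\ne\Gamma_{\m_0}(M)_t$ vanishing condition; more concretely, the graded module $\Gamma_{\m_0}(M)=\bigoplus_i \Gamma_{\m_0}(M_i)$ is a finitely generated graded $A$-module (since $A_0$ is noetherian local, the $\m_0$-torsion functor commutes with the grading and $\Gamma_{\m_0}(M)$ is a graded submodule of the finitely generated $M$), hence $\Gamma_{\m_0}(M)_t=0$ for $t\gg 0$ or $\Gamma_{\m_0}(M)_t\ne 0$ for $t\gg 0$ according to the Hilbert-function behavior of a finitely generated graded module (its support in the grading is eventually all of a bounded-above-or-below half-line when $A$ is homogeneous). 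This dichotomy shows $\depth_{A_0}(M_t)=0$ is an eventually stable condition.

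The main obstacle I expect is the inductive comparison step: making precise that $\depth_{A_0}(M_{t+1})$ and $\depth_{A_0}(M_t)$ eventually agree, rather than merely showing the sequence is eventually monotone. The surjection $M_t\otimes_{A_0}A_1\to M_{t+1}$ does not immediately give a clean relation between the two depths, so I would instead run the whole argument by descending induction on $\dim A_0$: after quotienting by a uniformly chosen regular element (when one exists), the reduced modules $M_t/xM_t$ are the graded pieces of the finitely generated graded $A$-module $M/xM$ over the lower-dimensional base $A_0/xA_0$ — wait, $x\in\m_0$ need not be in $A_0$ in a way that keeps things graded unless $x\in A_0$, which it is — so $M/xM=\bigoplus M_t/xM_t$ is again a finitely generated graded module over the homogeneous ring $A/xA$ with local base $A_0/xA_0$ of smaller dimension, and by induction its graded-piece depths stabilize. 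Combining with Lemma \ref{local lemma} (4), $\depth_{A_0}(M_t)=\depth_{A_0/xA_0}(M_t/xM_t)+1$ for large $t$, giving stability. The delicate point is ensuring a single $x$ works for all large $t$ at once and that $\depth_{A_0}(M_t)>0$ genuinely holds for all large $t$ in the non-base case, both of which follow from the finiteness of $\bigcup_i\ass_{A_0}(M_i)$ together with the eventual-stability dichotomy established in the base case applied after each reduction.
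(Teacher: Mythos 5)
Your proposal is correct, but it takes a genuinely different route from the paper. The paper's proof is very short: it observes that $\Ext_{A_0}^i(k_0,M)\simeq\bigoplus_t\Ext_{A_0}^i(k_0,M_t)$ is a finitely generated graded $A$-module for each $i\le\dim A_0$, so by Lemma~\ref{RS1.1.1} the annihilator of its $t$-th graded piece stabilizes; in particular whether $\Ext_{A_0}^i(k_0,M_t)$ vanishes is eventually constant in $t$, and since $\depth_{A_0}(M_t)=\inf\{\,i:\Ext_{A_0}^i(k_0,M_t)\ne0\,\}$ the claim follows. Your argument instead runs a descending induction on $\dim A_0$: after passing to the quotient by the stable annihilator ideal from Lemma~\ref{RS1.1.1} (so that $\supp_{A_0}(M_t)=\spec(A_0)$ for $t\gg0$), you note that $\Gamma_{\m_0}(M)=\bigoplus_t\Gamma_{\m_0}(M_t)$ is a finitely generated graded $A$-submodule of $M$, so by homogeneity either $\Gamma_{\m_0}(M)_t\ne0$ for all $t\gg0$ (whence $\depth_{A_0}(M_t)=0$ eventually) or $\Gamma_{\m_0}(M)_t=0$ for all $t\gg0$; in the latter case the finiteness of $\bigcup_t\ass_{A_0}(M_t)$ (Lemma~\ref{RS1.3}) together with prime avoidance yields a single $x\in\m_0$ that is $M_t$-regular for all $t\gg0$, and since these associated primes now include the minimal primes of $A_0$, we get $\dim(A_0/xA_0)<\dim A_0$, so one inducts on $M/xM$ over the homogeneous ring $A/xA$ and concludes via $\depth_{A_0}(M_t)=\depth_{A_0/xA_0}(M_t/xM_t)+1$. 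Both approaches are valid: the paper's is slicker and reduces everything to one application of the annihilator-stabilization lemma applied to $\Ext$-modules, whereas yours is more hands-on and structurally transparent, exhibiting uniform regular sequences explicitly. One editorial remark: the first half of your write-up (the tentative induction on the depth itself and the attempted comparison via the surjection $M_t\otimes_{A_0}A_1\to M_{t+1}$) is superseded by the cleaner descending induction on $\dim A_0$ that you ultimately settle on, and could simply be excised from a final proof.
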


\begin{proof}
It is seen that $\Ext_{A_0}^i(k_0,M)\simeq\bigoplus_{t\in\Z}\Ext_{A_0}^i(k_0,M_t)$ is a finitely generated graded $A$-module for any $0\le i\le \dim(A_0)$.
Since $\depth_{A_0} (M_t)={\rm inf}\{ i\mid \Ext_{A_0}^i(k_0,M_t)\ne 0 \}$ for each $t\in\Z$, the assertion follows from Lemma \ref{RS1.1.1}.
\end{proof}

Applying the ideas of the proof of \cite[Theorem 4.2]{RS}, we can prove the result below, which extends it.

\begin{lem}\label{RS4.2}
Suppose that $A$ is homogeneous.
Denote by $N_t$ the graded $A$-module $\bigoplus_{i\ge t}M_i$ for each $t\in\Z$.
If $\C_n^{A_0}(N_t)$ is open for all $t\in \Z$ and all $n\ge 0$, then there is an integer $k$ such that
$$
\depth_{(A_0)_\p}(M_t)_\p=\depth_{(A_0)_\p}(M_k)_\p
$$
for all integers $t\ge k$ and all prime ideals $\p$ of $A_0$.
\end{lem}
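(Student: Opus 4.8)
The plan is to apply Lemma \ref{RS4.0} to the loci $\C_n^{A_0}(N_t)$ and then pass from the truncations $N_t$ to their homogeneous components $M_t$.

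First I would record the reductions. Since $A$ is homogeneous, each $N_t=\bigoplus_{i\ge t}M_i$ is a finitely generated graded $A$-submodule of $M$, with $N_{t+1}\subseteq N_t$ and $N_t/N_{t+1}\cong M_t$. By Lemma \ref{RS1.1.1} there is an integer $k_1$ with $\ann_{A_0}(M_t)=\ann_{A_0}(M_{k_1})$ for all $t\ge k_1$, and then $\ann_{A_0}(N_t)=\bigcap_{i\ge t}\ann_{A_0}(M_i)$ is also constant for $t\ge k_1$, so Lemma \ref{graded lemma}(1) gives $\dim_{(A_0)_\p}((M_t)_\p)=\dim_{(A_0)_\p}((N_t)_\p)$ for every prime $\p$ of $A_0$ and every $t\ge k_1$. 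Also, applying Lemma \ref{local lemma}(2) to $(N_t)_\p$ over the graded ring $A_\p$ (whose base $(A_0)_\p$ is local) gives $\depth_{(A_0)_\p}((N_t)_\p)=\inf_{i\ge t}\depth_{(A_0)_\p}((M_i)_\p)$, which is nondecreasing in $t$ and is at most $\depth_{(A_0)_\p}((M_t)_\p)$.

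Next I would set $U_n^t:=\C_n^{A_0}(N_t)$. These are open by hypothesis, and $U_n^t\subseteq U_{n+1}^t$ is immediate; for $U_n^t\subseteq U_n^{t+1}$ note that $N_{t+1}\subseteq N_t$ forces $\dim_{(A_0)_\p}((N_{t+1})_\p)\le\dim_{(A_0)_\p}((N_t)_\p)$, while the last remark gives $\depth_{(A_0)_\p}((N_{t+1})_\p)\ge\depth_{(A_0)_\p}((N_t)_\p)$, so $\codepth_{(A_0)_\p}((N_{t+1})_\p)\le\codepth_{(A_0)_\p}((N_t)_\p)$ and hence $\p\in U_n^{t+1}$. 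Lemma \ref{RS4.0} then yields an integer $k\ge k_1$ with $\C_n^{A_0}(N_t)=\C_n^{A_0}(N_k)$ for all $t\ge k$ and all $n\ge0$, so for $t\ge k$ both $\codepth_{(A_0)_\p}((N_t)_\p)$ and, by the dimension equality, $\depth_{(A_0)_\p}((N_t)_\p)$ are independent of $t$; write $d(\p)$ for this value. In particular $\depth_{(A_0)_\p}((M_t)_\p)\ge\depth_{(A_0)_\p}((N_t)_\p)=d(\p)$ for all $t\ge k$.

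The main obstacle is the reverse inequality, uniformly in $\p$. For a fixed $\p$ this is routine: Lemma \ref{RS4.1} applied to the homogeneous ring $A_\p$ and the module $M_\p$ gives an integer $k_\p$ with $\depth_{(A_0)_\p}((M_t)_\p)$ independent of $t$ for $t\ge k_\p$, and comparison of this constant with $d(\p)=\inf_{i\ge t}\depth_{(A_0)_\p}((M_i)_\p)$ identifies it as $d(\p)$; but $k_\p$ a priori depends on $\p$. To remove this dependence I would study the locus $B_t:=\{\p:\depth_{(A_0)_\p}((M_t)_\p)>d(\p)\}$ for $t\ge k$: it is nonincreasing in $t$ and $\bigcap_{t\ge k}B_t=\varnothing$ by the fibrewise statement, so it suffices to see that this descending family of loci stabilizes. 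Here I would exploit that, by hypothesis, the $\C_n^{A_0}(N_t)$ are open — so that for $t\ge k$ the function $\p\mapsto\codepth_{(A_0)_\p}((N_t)_\p)$ is a fixed upper semicontinuous function and $d(\p)=\dim_{(A_0)_\p}((M_t)_\p)-\codepth_{(A_0)_\p}((N_t)_\p)$ — together with the finiteness of $\ass_{A_0}(M)=\bigcup_i\ass_{A_0}(M_i)$ from Lemma \ref{RS1.3}, applied also to the finitely generated graded $A$-modules $\Ext^j_{A_0}(A_0/\p,M)=\bigoplus_t\Ext^j_{A_0}(A_0/\p,M_t)$ (whose $\p$-localized homogeneous pieces control $\depth_{(A_0)_\p}((M_t)_\p)$), to run a Noetherian induction on $\spec A_0$ forcing $B_t=\varnothing$ for all large $t$. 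Any such $t$ then serves as the required $k$, which completes the proof.
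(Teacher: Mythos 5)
Your reductions up through applying Lemma \ref{RS4.0} to the loci $U_n^t = \C_n^{A_0}(N_t)$, and the observation that Lemma \ref{RS4.1} handles any single fixed prime, are correct and agree with the paper's setup. The trouble is the last step, where the dependence of the stabilization bound $k_\p$ on $\p$ must be eliminated; here your argument has a genuine gap. The claim that $B_t = \{\p : \depth_{(A_0)_\p}(M_t)_\p > d(\p)\}$ is nonincreasing in $t$ is unjustified: for $t\ge k$ one knows that $\inf_{i\ge t}\depth_{(A_0)_\p}(M_i)_\p$ is independent of $t$, but the individual values $\depth_{(A_0)_\p}(M_t)_\p$ need not be monotone in $t$, so $\p\in B_{t+1}$ does not imply $\p\in B_t$. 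Without monotonicity the ``descending family with empty intersection must terminate'' heuristic does not apply, and the Noetherian induction you allude to is never actually set up --- no closed or constructible structure on $B_t$ is exhibited, and the remark about $\Ext^j_{A_0}(A_0/\p,M)$ does not by itself produce one.

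The paper closes this gap by a different, more concrete device. Once the loci $U_n:=\C_n^{A_0}(N_t)$ are stable for $t\ge l$, write the closed complements as $\spec(A_0)\setminus U_n=\V_{A_0}(I_n)$, note that $U_m=\spec(A_0)$ for some $m\ge0$, and observe that $\bigcup_{n=0}^{m-1}\ass_{A_0}(A_0/I_n)$ is a \emph{finite} set of primes. Lemma \ref{RS4.1} is then applied to that finite set to obtain a single bound $k\ge l$. For an arbitrary $\p$: if $\p\in U_0$, the chain $\depth_{(A_0)_\p}(M_t)_\p\le\dim_{(A_0)_\p}(M_t)_\p\le\dim_{(A_0)_\p}(N_k)_\p\le\depth_{(A_0)_\p}(N_k)_\p\le\depth_{(A_0)_\p}(M_t)_\p$ forces equality. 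Otherwise $\codepth_{(A_0)_\p}(N_k)_\p=n+1$ for the least $n$ with $\p\notin U_n$, and one picks $\q\subseteq\p$ with $\q\in\ass_{A_0}(A_0/I_n)$; then $\codepth_{(A_0)_\q}(N_k)_\q\ge n+1$, and Lemma \ref{f.g. gene closed} applied to the finitely generated $A_0$-module $M_t$ (comparing $\q$ and $\p$), combined with the stability at $\q$ from Lemma \ref{RS4.1} and the annihilator stability, pinches $\codepth_{(A_0)_\p}(M_t)_\p$ between two copies of $n+1$, hence makes it constant in $t$. Your proposal is missing precisely this transfer via generic closedness from the finite set of associated primes of the $I_n$ to an arbitrary prime $\p$.
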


\begin{proof}
It follows from (1) and (2) of Lemma \ref{local lemma} that $\C_n^{A_0}(N_t)$ is contained in both $\C_n^{A_0}(N_{t+1})$ and $\C_{n+1}^{A_0}(N_t)$ for all $t\in\Z$ and all $n\ge 0$.
By Lemmas \ref{RS1.1.1} and \ref{RS4.0}, we can choose an integer $l\in\Z$ such that
\begin{equation}\label{4.2RS1}
J:=\ann_{A_0}(M_t)=\ann_{A_0}(M_l)\ \ {\rm and} \ \ 
U_n:=\C_n^{A_0}(N_t)=\C_n^{A_0}(N_l)
\end{equation}
for all $t\ge l$ and $n\ge 0$.
Any prime ideal of $A_0$ belongs to $U_n$ for some $n\ge 0$.
An analogous argument to the proof of Lemma \ref{RS4.0} shows that there exists an integer $m\ge 0$ such that $U_m=\spec(A_0)$.
For each $0\le n\le m-1$, we can write $\V_{A_0}(I_n)=\spec(A_0)\setminus U_n$ for some ideal $I_n$ of $A_0$.
The subset $\bigcup_{n=0}^{m-1} \ass_{A_0}(A_0/I_n)$ of $\spec(A_0)$ is finite.
It follows from Lemma \ref{RS4.1} that we can take $k\ge l$ such that
\begin{equation}\label{4.2RS2}
\depth_{(A_0)_\q}(M_t)_\q=\depth_{(A_0)_\q}(M_k)_\q
\end{equation}
for any $t\ge k$, and any $\q\in\bigcup_{n=0}^{m-1} \ass_{A_0}(A_0/I_n)$.

Let $\p$ be a prime ideal of $A_0$.
We claim that $\depth_{(A_0)_\p}(M_t)_\p=\depth_{(A_0)_\p}(M_k)_\p$ for all $t\ge k$.
We may assume that $\p$ contains $J$.
If $\p$ belongs to $U_0$, then we have
$$
\depth_{(A_0)_\p}(M_t)_\p\le \dim_{(A_0)_\p}(M_t)_\p\le \dim_{(A_0)_\p}(N_k)_\p\le
\depth_{(A_0)_\p}(N_k)_\p\le \depth_{(A_0)_\p}(M_t)_\p
$$
for all $t\ge k$ by (1) and (2) of Lemma \ref{local lemma}.
This means that the claim holds.
If $\p$ does not belong to $U_0$, then $\codepth_{(A_0)_\p}(N_k)_\p=n+1$ for some $0\le n\le m-1$.
As $\p$ is not in $U_n$, we see that $\q\subseteq\p$ for some $\q\in\ass_{A_0}(A_0/I_n)$.
By (1) and (2) of Lemma \ref{local lemma}, Lemma \ref{f.g. gene closed}, (\ref{4.2RS1}) and (\ref{4.2RS2}), it is seen that
$$
n+1\le \codepth_{(A_0)_\q}(N_k)_\q=\codepth_{(A_0)_\q}(M_t)_\q\le 
\codepth_{(A_0)_\p}(M_t)_\p\le \codepth_{(A_0)_\p}(N_k)_\p=n+1.
$$
for all $t\ge k$.
Hence we get $\codepth_{(A_0)_\p}(M_t)_\p=n+1$ for all $t\ge k$.
The claim follows from (\ref{4.2RS1}).
\end{proof}

\section{Asymptotic stability of depths of localizations of modules}

In this section, we prove the main result of this paper.
All of the results of Theorem \ref{main} are given as corollaries of the theorem below.

\begin{thm}\label{local depth}
Let $R$ be a ring, $I$ an ideal of $R$, and $M$ a finitely generated $R$-module.
Suppose that that the ring $\bar{R}:=R/(I+\operatorname{Ann}_R (M))$ is catenary and that $\cm(\bar{R}/\bar{\p})$ contains a nonempty open subset of $\spec (\bar{R}/\bar{\p})$ for any prime ideal $\bar{\p}$ of $\bar{R}$.
Then there is an integer $k>0$ such that
$$
\depth (M/I^t M)_\p=\depth (M/I^k M)_\p
$$
for all integers $t\ge k$ and all prime ideals $\p$ of $R$.
\end{thm}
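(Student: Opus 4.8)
The plan is to reduce to the graded situation treated in Section 3, applied to the associated graded ring. Set $G:=\operatorname{gr}_I(R)=\bigoplus_{n\ge0}I^n/I^{n+1}$, which is homogeneous over $G_0:=R/I$, and let $\mathcal{G}:=\operatorname{gr}_I(M)=\bigoplus_{n\ge0}I^nM/I^{n+1}M$, a finitely generated graded $G$-module whose degree-$t$ component is $I^tM/I^{t+1}M$. For every $t$ the truncation $N_t:=\bigoplus_{i\ge t}\mathcal{G}_i$ is annihilated, as a $G_0$-module, by the image of $\operatorname{Ann}_R(M)$, so $G_0/\operatorname{Ann}_{G_0}(N_t)$ is a quotient of $\bar R$ and hence catenary; and any $\q\in\supp_{G_0}(N_t)$ corresponds to a prime $\p\supseteq I+\operatorname{Ann}_R(M)$ of $R$, so that $G_0/\q=\bar R/(\p\bar R)$ and $\cm(G_0/\q)=\cm(\bar R/(\p\bar R))$ contains a nonempty open subset by hypothesis. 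Therefore Corollary \ref{codepth2}(2) applies to $(G,N_t)$, so each $\C_n^{G_0}(N_t)$ is open.

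First I would run Lemma \ref{RS4.2} on $(G,\mathcal{G})$: it is applicable since $G$ is homogeneous and the codepth loci $\C_n^{G_0}(N_t)$ are open, and it yields an integer $k_1$ such that $\depth_{(R/I)_\p}(I^tM/I^{t+1}M)_\p$ is independent of $t\ge k_1$ for every prime $\p$ of $R/I$. Since $I^tM/I^{t+1}M$ is killed by $I$ and localizes to zero at primes not containing $I$, this says: $\depth_{R_\p}(I^tM/I^{t+1}M)_\p$ equals a constant $\alpha_\p$ for all $t\ge k_1$ and all primes $\p$ of $R$. Next, feeding the exact sequences $0\to I^tM/I^{t+1}M\to I^sM/I^{t+1}M\to I^sM/I^tM\to0$ into the depth lemma (Lemma \ref{local lemma}(3) and its standard companion inequality for submodules) and inducting on $t$, one gets $\depth_{R_\p}(I^sM/I^tM)_\p=\alpha_\p$ for all $t>s\ge k_1$ and all $\p$.

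It remains to pass from $\operatorname{gr}_I(M)$ to the modules $M/I^tM$, which is the crux. The openness part transfers: $\operatorname{Ann}_R(M/I^tM)\supseteq I^t+\operatorname{Ann}_R(M)$ has the same radical as $I+\operatorname{Ann}_R(M)$, so $R/\operatorname{Ann}_R(M/I^tM)$ is catenary, while $\supp_R(M/I^tM)=\V_R(I+\operatorname{Ann}_R M)$ (so $\dim_{R_\p}(M/I^tM)_\p$ is independent of $t\ge1$) and $R/\q=\bar R/(\q\bar R)$ on this support; hence Corollary \ref{codepth2}(2), applied to the trivially graded ring $R$ and the module $M/I^tM$, shows that each $\C_n^R(M/I^tM)$ is open. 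Now fix a prime $\p$ and use $0\to I^{k_1}M/I^tM\to M/I^tM\to M/I^{k_1}M\to0$, whose outer terms have the $t$-independent depths $\alpha_\p$ and $\mu_\p:=\depth_{R_\p}(M/I^{k_1}M)_\p$: when $\mu_\p\ne\alpha_\p-1$ the depth lemma forces $\depth_{R_\p}(M/I^tM)_\p$ to be constant for $t\ge k_1+1$ (the common value being $\mu_\p$ if $\mu_\p\le\alpha_\p$, and $\alpha_\p$ if $\mu_\p>\alpha_\p$), while in the borderline case $\mu_\p=\alpha_\p-1$ an inspection of the connecting maps $H^{\alpha_\p-1}_{\p R_\p}(M/I^{k_1}M)\to H^{\alpha_\p}_{\p R_\p}(I^{k_1}M/I^tM)$, whose kernels decrease with $t$, shows the depth is still eventually constant.

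The remaining, and principal, difficulty is that in the borderline case the stabilization index furnished pointwise may depend on $\p$. To upgrade it to a single integer $k$ valid for every prime, I would feed a suitable subfamily of $\{\C_n^R(M/I^tM)\}$ into Lemma \ref{RS4.0}; making this work requires showing that, over the part of $\spec(R)$ where the borderline behaviour occurs, these loci become monotone in $t$ for $t\ge k_1$, and this is precisely where one must exploit the sharp stability established in the second step — that $I^sM/I^tM$ has depth exactly $\alpha_\p$ for all $t>s\ge k_1$ — together with the constancy of $\dim_{R_\p}(M/I^tM)_\p$. I expect this uniformity argument to be the main obstacle of the proof.
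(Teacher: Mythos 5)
Your opening reduction is exactly the paper's: you form the associated graded ring $G=\operatorname{gr}_I(R)$ and module $\mathcal{G}=\operatorname{gr}_I(M)$, verify the catenary and $\cm$-openness hypotheses on the quotient $\bar R$, invoke Corollary~\ref{codepth2}(2) to get openness of the codepth loci $\C_n^{G_0}(N_t)$, and then Lemma~\ref{RS4.2} to produce an integer $k_1$ (the paper's $m$) past which $\depth_{R_\p}(I^tM/I^{t+1}M)_\p$ is independent of $t$, uniformly in $\p$. You also correctly note that the same openness machinery applies to the trivially graded ring $R$ and the modules $M/I^tM$, so each $\C_n^R(M/I^tM)$ is open. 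Up to this point the proposal is sound and follows the paper.

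The gap is the one you yourself flag. Your pointwise analysis of the sequences $0\to I^{k_1}M/I^tM\to M/I^tM\to M/I^{k_1}M\to 0$ does establish that for each fixed $\p$ the depth $\depth_{R_\p}(M/I^tM)_\p$ is eventually constant (the local cohomology observation in the borderline case $\mu_\p=\alpha_\p-1$ is correct: the kernels of the connecting maps $\delta_t$ form a descending chain of submodules of the Artinian module $H^{\alpha_\p-1}_{\p R_\p}((M/I^{k_1}M)_\p)$, hence stabilize). But, as you say, the threshold depends on $\p$, and you do not actually establish the monotonicity in $t$ of the loci $\C_n^R(M/I^tM)$ that would be needed to run Lemma~\ref{RS4.0} on them directly; you only announce that one ``must exploit'' the stable depths and expect this to be the main obstacle. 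That is precisely the missing step, and it is not a routine verification: the individual loci are not obviously nested, and trying to prove they are is harder than what the paper does.

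The paper's device to resolve this is simple but essential, and it is absent from your proposal: instead of working with $\C_n^R(M/I^tM)$ directly, one forms the \emph{unions}
$U_n^t:=\bigcup_{m\le i\le t}\C_n^R(M/I^iM)$, which are open (as finite unions of open sets) and are \emph{automatically} increasing in both $t$ and $n$, so Lemma~\ref{RS4.0} applies with no further argument and yields an $l\ge m$ with $U_n^t=U_n^l$ for all $t\ge l$ and all $n$. One then sets $k=l+1$ and proves the claim $\C_n^R(M/I^tM)=\C_n^R(M/I^kM)$ for $t\ge k$ by the following exact-sequence argument: given $\p\in\C_n^R(M/I^tM)$ with $t\ge k$, the equality $U_n^t=U_n^l$ supplies some $s$ with $m\le s\le l$ and $\p\in\C_n^R(M/I^sM)$; if $r:=\depth(I^iM/I^{i+1}M)_\p$ (constant for $i\ge m$) were strictly less than $d-n$, the depth lemma applied inductively to $0\to(I^iM/I^{i+1}M)_\p\to(M/I^{i+1}M)_\p\to(M/I^iM)_\p\to0$ would force $\depth(M/I^iM)_\p=r<d-n$ for all $i>s$, contradicting $\p\in\C_n^R(M/I^tM)$; hence $r\ge d-n$, and a second induction gives $\depth(M/I^iM)_\p\ge d-n$ for all $i\ge s$. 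In short, you have the right ingredients but not the pivot (passing to nested unions and then arguing by contradiction from a small index $s$ forward), and without it the uniformity in $\p$ does not follow.
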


\begin{proof}
The associated graded ring $A=\bigoplus_{i\ge 0}I^i/I^{i+1}$ is a homogeneous ring.
Then $\bigoplus_{i\ge 0}I^i M/I^{i+1} M$ is a finitely generated graded $A$-module.
By Corollary \ref{codepth2} (2) and Lemma \ref{RS4.2}, we find an integer $m>0$ such that 
\begin{equation}\label{last1}
\depth (I^t M/I^{t+1} M)_\p=\depth (I^m M/I^{m+1} M)_\p
\end{equation}
for all integers $t\ge m$ and all prime ideals $\p$ of $R$.
Note that 
\begin{equation}\label{last2}
X:=\supp_R(M) \cap \supp_R(R/I)=\supp_R(M/I^{i} M)
\end{equation}
for any $i>0$.
Applying Corollary \ref{codepth2} (2) to $A=A_0=R$, we see that $U_n^t :=\bigcup_{m\le i\le t} \C_n^{R}(M/I^i M)$ is open for any $t\ge m$ and any $n\ge 0$.
Lemma \ref{RS4.0} implies that there is an integer $l\ge m$ such that $U_n^t=U_n^l$ for all $t\ge l$ and all $n\ge0$.
Put $k=l+1$.
By (\ref{last2}), we have only to show that the following claim holds.
\begin{spacing}{1.2}
\ \textbf{Claim.} \ $\C_n^{R}(M/I^t M)=\C_n^{R}(M/I^k M)$ for all $t\ge k$ and all $n\ge0$.
\end{spacing}
Fix an integer $n\ge 0$.
Let $\p$ be a prime ideal of $R$ belonging to $\C_n^{R}(M/I^t M)$ for some $t\ge k$.
We prove that $\p$ is in $\C_n^{R}(M/I^i M)$ for all $i\ge k$.
We may assume that $\p$ is in $X$.
By (\ref{last1}) and (\ref{last2}), we obtain $r:=\depth (I^k M/I^{k+1} M)_\p=\depth (I^i M/I^{i+1} M)_\p$ and $d:=\dim(M/I^k M)_\p=\dim(M/I^i M)_\p$ for all $i\ge m$.
The prime ideal $\p$ belongs to $\C_n^{R}(M/I^s M)$ for some $m\le s \le l$ since $U_n^t=U_n^l$, which means $\depth (M/I^s M)_\p\ge d-n$.
For each integer $i\ge s$, there is an exact sequence 
$$
0 \to (I^i M/I^{i+1} M)_\p \to (M/I^{i+1} M)_\p \to (M/I^i M)_\p \to 0.
$$
Suppose $r<d-n$.
It follows from \cite[Proposition 1.2.9]{BH} and by induction on $i$ that $\depth (M/I^{i+1} M)_\p=r$ for any $i\ge s$.
In particular, we have $\depth (M/I^t M)_\p=r<d-n$.
This is a contradiction.
Hence, we get $r\ge d-n$.
Similarly, we see by induction on $i$ that $\depth (M/I^i M)_\p\ge d-n$ for any $i\ge s$.
This means $\p$ belongs to $\C_n^{R}(M/I^i M)$ for all integers $i\ge s$.
The proof of claim is now completed.
\end{proof}

We recall a few definitions of notions used in our next result.

\begin{dfn}\label{def rings}
A ring $R$ is said to be \textit{quasi-excellent} if the following two conditions are satisfied.
\begin{enumerate}[\rm(1)]
\item  For all finitely generated $R$-algebras $S$, $\reg(S) = \{\p\in\spec (S)\mid$ the local ring $S_\p$ is regular$ \}$ is open.
\item All the formal fibers of $R_\p$ are regular for all prime ideals $\p$ of $R$.
\end{enumerate}
A ring $R$ is said to be \textit{excellent} if it is quasi-excellent and universally catenary.
A ring in which ``regular'' is replaced with ``Gorenstein'' in the definition of an excellent ring is called an \textit{acceptable ring} \cite{S}.
\end{dfn}

Applying the above theorem, we can prove the main result of this paper.

\begin{cor}\label{main cor}
Let $R$ be a ring and $I$ an ideal of $R$.
Let $M$ be a finitely generated $R$-module.
Put $\bar{R}=R/(I+\operatorname{Ann}_R (M))$.
Then there is an integer $k>0$ such that
$$
\depth (M/I^t M)_\p=\depth (M/I^k M)_\p
$$
for all integers $t\ge k$ and all prime ideals $\p$ of $R$ in each of the following cases.\\
{\rm(1)} $M$ is Cohen--Macaulay. 
{\rm(2)} $M/I^n M$ is Cohen--Macaulay for some $n>0$.
{\rm(3)} $\bar{R}$ is a homomorphic image of a Cohen--Macaulay ring. 
{\rm(4)} $\bar{R}$ is semi-local.
{\rm(5)} $\bar{R}$ is excellent.
{\rm(6)} $\bar{R}$ is quasi-excellent and catenary.
{\rm(7)} $\bar{R}$ is acceptable.
\end{cor}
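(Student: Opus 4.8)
The plan is to deduce all seven cases from Theorem~\ref{local depth}. Its hypotheses are that $\bar{R}$ is catenary and that $\cm(\bar{R}/\bar{\p})$ contains a nonempty open subset of $\spec(\bar{R}/\bar{\p})$ for each prime ideal $\bar{\p}$ of $\bar{R}$. Since each $\bar{R}/\bar{\p}$ is the domain $R/\p$ with $\p\supseteq I+\operatorname{Ann}_R(M)$, and the generic point of $\spec(R/\p)$ always lies in $\cm(R/\p)$, the real content of the second hypothesis is that the Cohen--Macaulay locus of every such domain is open. So for each case I would verify that (i) $\bar{R}$ is catenary and (ii) $\cm(R/\p)$ is open for every $\p\supseteq I+\operatorname{Ann}_R(M)$.

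Cases~(3), (5), (6), and~(7) I would handle uniformly. Each of the four classes appearing there --- homomorphic images of Cohen--Macaulay rings, excellent rings, quasi-excellent rings, and acceptable rings --- is stable under passing to homomorphic images, so every $\bar{R}/\bar{\p}$ again belongs to the same class; moreover each such ring is (universally) catenary, which in case~(6) is instead assumed of $\bar{R}$ and descends to its quotients, and this gives~(i). For~(ii): a homomorphic image of a Cohen--Macaulay ring has open Cohen--Macaulay locus --- one reduces to a Cohen--Macaulay ambient ring $C$, over which the non--Cohen--Macaulay locus of a finitely generated module $N$ equals the finite union $\bigcup_{i<j}\bigl(\supp_C\Ext_C^i(N,C)\cap\supp_C\Ext_C^j(N,C)\bigr)$ of closed sets --- and for~(5)--(7) openness of the Cohen--Macaulay loci of finitely generated modules is part of the package of (quasi-)excellence, resp.\ acceptability (good formal fibers together with the relevant openness-of-loci condition); see \cite{Mat,G,S}. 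Theorem~\ref{local depth} then applies.

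Case~(4) I would reduce to case~(5). If $\bar{R}$ is semi-local, then $\supp_R(M/I^tM)=\V_R(I+\operatorname{Ann}_R(M))$ contains only finitely many maximal ideals of $R$; for each such $\n$ the completion $S:=\widehat{R_\n}$ is a complete Noetherian local ring, hence excellent, so case~(5) applied over $S$ to $M\otimes_RS$ and the ideal $IS$ produces an integer $k_\n$ that governs the stability of the depths of localizations of $(M/I^tM)\otimes_RS\cong (M\otimes_RS)/(IS)^t(M\otimes_RS)$. For a prime $\p\subseteq\n$ of $R$, any prime $\q$ of $S$ minimal over $\p S$ satisfies $\q\cap R=\p$ and makes the closed fibre of the flat local homomorphism $R_\p\to S_\q$ zero-dimensional, so $\depth(M/I^tM)_\p=\depth\bigl((M/I^tM)\otimes_RS\bigr)_\q$ by the base-change formula for depth; taking $k=\max_\n k_\n$ over the finitely many relevant $\n$ finishes this case, primes outside $\supp_R(M/I^tM)$ being harmless.

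Cases~(1) and~(2), where the Cohen--Macaulay condition is placed on the module rather than on the ring, are where I expect the main obstacle, because the openness of the loci $\cm(\bar{R}/\bar{\p})$ is not directly supplied by $M$, resp.\ $M/I^nM$, being Cohen--Macaulay. Catenariness does come for free: a finitely generated Cohen--Macaulay module is a faithful maximal Cohen--Macaulay module over the quotient of $R$ by its annihilator, and a ring admitting such a module is universally catenary --- its completion at every maximal ideal is equidimensional, so Ratliff's criterion (see \cite{Mat}) applies --- hence $\bar{R}$ is catenary in case~(1), and also in case~(2) because $\operatorname{Ann}_R(M/I^nM)$ and $I+\operatorname{Ann}_R(M)$ cut out the same subspace of $\spec(R)$. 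For the openness I would re-run the proof of Theorem~\ref{local depth}, replacing the use of Corollary~\ref{codepth2}(2) --- which is exactly where the hypothesis on $\cm(\bar{R}/\bar{\p})$ was needed --- by a direct application of the unconditional openness results of Section~3: when $M$ (resp.\ $M/I^nM$) is Cohen--Macaulay one has $\cm_R(M)=\spec(R)$, and the plan is to exploit this, after localizing at a suitable $f\notin\p$ and cutting down by an $M$-regular sequence in the manner of the proofs of Theorems~\ref{cm2} and~\ref{codepth}, in order to check the hypotheses of Corollary~\ref{cm openness} and Corollary~\ref{codepth2}(1) for the homogeneous components of the associated graded module and for the modules $M/I^tM$, and then to feed the outcome into Lemma~\ref{RS4.2} and the argument of Theorem~\ref{local depth}. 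Making this reduction precise --- in particular keeping the dimensions of the relevant quotient modules under control in a neighbourhood of $\p$ --- is the step I would expect to be the principal difficulty.
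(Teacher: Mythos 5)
Your handling of cases (3)--(7) is essentially correct and matches the paper, modulo minor choices. For (4) you reduce to (5) by completing, whereas the paper reduces to (3) (a complete local ring is a homomorphic image of a regular local ring and applies its formula $\depth_{R_\p}(M/I^tM)_\p = \depth_{\hat R_\q}(\hat M/I^t\hat M)_\q - \depth_{\hat R_\q}(\hat R_\q/\p\hat R_\q)$ for an arbitrary $\q$ over $\p$); both are fine. One small caveat in (3): the Ext-characterisation of the non--Cohen--Macaulay locus over an ambient Cohen--Macaulay ring $C$ really uses a canonical module of $C$ in place of $C$ itself, so you should either work Zariski-locally where a canonical module exists or cite the openness directly.

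The genuine gap is in cases (1) and (2), and you correctly diagnose it yourself. The missing ingredient is the converse direction of the generic-openness criterion: that when $\p$ lies in $\cm_R(M)$ and $\cm_R(M)$ contains a nonempty open subset of $\V_R(\p)$ --- which is automatic once $M$ (or $M/I^nM$) is Cohen--Macaulay, since then $\cm_R(M)=\spec(R)$ --- one can conclude that $\cm(R/\p)$ contains a nonempty open subset of $\spec(R/\p)$. This is precisely what the paper imports from \cite[Theorem 5.4 and Corollary 5.6]{K}, and it is a substantive result in its own right, not a routine rearrangement of the proofs of Theorems~\ref{cm2} and~\ref{codepth} (which only go the other way, deducing openness for $M$ from openness for $R/\p$). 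Your proposed workaround --- ``re-run the proof of Theorem~\ref{local depth}, cutting down by a regular sequence and controlling dimensions near $\p$'' --- is gesturing at the content of \cite[Theorem 5.4]{K}, but as written it does not establish that the hypothesis of Theorem~\ref{local depth} holds, and you acknowledge as much. Without that input, cases (1) and (2) remain unproved. (For catenariness your Ratliff-criterion argument is a valid alternative to the paper's citation of \cite[Theorem 2.1.3(b)]{BH}.)
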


\begin{proof}
In any of the latter three cases, the assertion follows from Theorem \ref{local depth}.

(1): It is seen by \cite[Theorem 2.1.3 (b)]{BH} and \cite[Theorem 5.4]{K} that $R/\operatorname{Ann}_R (M)$ is catenary and $\cm(R/\p)$ contains a nonempty open subset of $\spec (R/\p)$ for any $\p\in\supp_{R}(M)$.
Thus the assertion follows from Theorem \ref{local depth}.

(2) and (3): The assertion can be shown in a similar way as in the proof of (1); see \cite[Corollary 5.6]{K}.

(4): We may assume that $R$ is local.
Let $\hat{R}$ be the completion of $R$ and $\hat{M}$ the completion of $M$.
For any prime ideal $\p$ of $R$, there exists a prime ideal $\q$ of $\hat{R}$ such that $\p=\q\cap R$ because $\hat{R}$ is faithfully flat over $R$.
It follows from \cite[Proposition 1.2.16 (a)]{BH} that
$$
\depth_{R_\p} (M/I^t M)_\p=\depth_{\hat{R}_\q} (\hat{M}/I^t \hat{M})_\q-
\depth_{\hat{R}_\q} (\hat{R}_\q/\p \hat{R}_\q)
$$
for any $t>0$.
The assertion follows from (3) since $\hat{R}$ is a homomorphic image of a regular local ring.
\end{proof}

The assumptions about the ring $\bar{R}$ in Theorem \ref {local depth} and Corollary \ref{main cor} are satisfied if so does the ring $R$.
The above corollary recovers the theorem of Rotthaus and \c{S}ega \cite{RS}.

\begin{cor}[Rotthaus--\c{S}ega]\label{recover1}
Let $R$ be an excellent ring and let $M$ be a Cohen--Macaulay $R$-module.
Let $I$ be an ideal of $R$ which is not contained in any minimal prime ideal of $M$.
Then there is an integer $k>0$ such that $\depth (M/I^t M)_\p=\depth (M/I^k M)_\p
$ for all integers $t\ge k$ and all prime ideals $\p$ of $R$.
\end{cor}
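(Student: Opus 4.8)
The plan is to recognize Corollary~\ref{recover1} as a special case of Corollary~\ref{main cor}(1). The only hypothesis needed in part~(1) of that corollary is that $M$ be Cohen--Macaulay, so the desired integer $k$ is produced at once; in particular, neither the excellence of $R$ nor the assumption that $I$ avoids the minimal primes of $M$ is actually used. This already strengthens the theorem of Rotthaus and \c{S}ega, since the latter condition enters \cite{RS} only to guarantee $\dim(M/IM)<\dim(M)$, a device our argument does not require.

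For transparency it is worth recording why the hypotheses of Theorem~\ref{local depth} hold here. Set $\bar R=R/(I+\ann_R(M))$. Since $M$ is Cohen--Macaulay, $R/\ann_R(M)$ is catenary by \cite[Theorem~2.1.3(b)]{BH}, hence so is its homomorphic image $\bar R$. Given a prime $\bar\p$ of $\bar R$, its preimage $\p$ in $R$ lies in $\supp_R(M)=\V_R(\ann_R(M))$, and $\bar R/\bar\p\cong R/\p$; by \cite[Theorem~5.4]{K} the locus $\cm(R/\p)$ contains a nonempty open subset of $\spec(R/\p)$, which is exactly the condition on $\cm(\bar R/\bar\p)$. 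Thus Theorem~\ref{local depth} applies verbatim and yields the integer $k>0$ with $\depth(M/I^tM)_\p=\depth(M/I^kM)_\p$ for all $t\ge k$ and all prime ideals $\p$ of $R$.

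Unwinding Theorem~\ref{local depth} itself, the mechanism is: pass to the associated graded ring $A=\bigoplus_{i\ge0}I^i/I^{i+1}$, which is homogeneous over $A_0=R$, with $\bigoplus_{i\ge0}I^iM/I^{i+1}M$ a finitely generated graded $A$-module; invoke Corollary~\ref{codepth2}(2) and Lemma~\ref{RS4.2} to stabilize $\depth(I^tM/I^{t+1}M)_\p$ uniformly in $\p$ for large $t$; and then compare the codepth loci $\C_n^{R}(M/I^tM)$ for large $t$ using Lemma~\ref{RS4.0} together with the short exact sequences $0\to(I^iM/I^{i+1}M)_\p\to(M/I^{i+1}M)_\p\to(M/I^iM)_\p\to0$ and the depth inequalities of \cite[Proposition~1.2.9]{BH}. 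There is no genuine obstacle beyond what Sections~2--4 already supply; if one must isolate the delicate point, it is the verification --- via \cite[Theorem~5.4]{K} --- that for a Cohen--Macaulay module the rings $R/\p$ are generically Cohen--Macaulay and $R/\ann_R(M)$ is catenary, which is precisely what powers Corollary~\ref{cm openness} and hence the openness input to Lemma~\ref{RS4.2}.
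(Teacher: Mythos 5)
Your proposal is correct and takes the same route as the paper: Corollary \ref{recover1} is simply read off as a special case of Corollary \ref{main cor}(1), which in turn is established via \cite[Theorem 2.1.3(b)]{BH}, \cite[Theorem 5.4]{K}, and Theorem \ref{local depth}. Your observation that the excellence of $R$ and the avoidance of minimal primes of $M$ are not needed matches the paper's claim that Corollary \ref{main cor} strictly generalizes the Rotthaus--\c{S}ega result.
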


By using the technique of the proof of Theorem \ref{local depth}, the depths of localizations of $M/I^{n+1}M$ can be measured by those of $I^nM/I^{n+1}M$ for each integer $n$.
We provide two examples where Corollary \ref{main cor} is applicable, but Corollary \ref{recover1} is not.

\begin{ex}
Let $R=K \llbracket x, y, z, w \rrbracket/(xy-zw)$ be a quotient of a formal power series
ring over a field $K$.
Take the ideal $I=(x)$ of $R$ and the finitely generated $R$-module $M=R/(w)$.
The ring $R$ is a local hypersurface of dimension 3 that has an isolated singularity.
The module $M$ is Cohen--Macaulay, and all elements of $I$ are zero-divisors of $M$.
Then $M$ is also a module over $A=K \llbracket x, y, z \rrbracket$.
We see that $M\simeq A/(xy)$, $\ M/I^n M\simeq A/(x^n, xy)$ and $\ I^nM/I^{n+1}M\simeq A/(x,y)$.
Let $\p$ be a prime ideal of $A$.
A similar argument to the latter part of the proof of Theorem \ref{local depth} shows that $\depth (M/I^n M)_\p=\height\p - 2$ for any integer $n\ge2$ if $\p$ contains the ideal $(x,y)$ of $A$;
otherwise, we have $(M/I^{n+1} M)_\p \simeq (M/I^nM)_\p$ for any integer $n\ge1$.
This says that the integer $k=2$ satisfies the assertion of Corollary \ref{main cor}.
\end{ex}

\begin{ex}
Let $R=K [x, y, z]$ be a polynomial ring over a field $K$.
Take the ideal $I=(x)$ of $R$ and the finitely generated $R$-module $M=R/(x^m y, x^m z)$, where $m>0$.
The ring $R$ is regular but not local.
All elements of $I$ are zero-divisors of $M$.
The $R$-module $M$ is not Cohen--Macaulay; see \cite[Theorem 2.1.2 (a)]{BH}.
We have
$$
M/I^n M\simeq R/(x^n, x^m y, x^m z), \quad
I^nM/I^{n+1}M\simeq 
\begin{cases}
        {R/(x) \quad (n<m)}\\
        {R/(x, y, z) \quad (n\ge m)}.
\end{cases}
$$
Let $\p$ be a prime ideal of $R$.
Suppose $\p=(x,y,z)$.
We get $\depth (M/I^n M)_\p=2$ for any $0\le n\le m$.
On the other hand, we obtain $\depth (M/I^n M)_\p=0$ for any $n>m$ since the submodule $I^{n-1}M/I^n M$ of $M/I^n M$ is isomorphic to $R/\p$.
It is seen that $(M/I^{n+1} M)_\p \simeq (M/I^nM)_\p$ for any integer $n\ge m$ if $\p\ne (x,y,z)$.
This says that the integer $k=m+1$ satisfies the assertion of Corollary \ref{main cor}.
\end{ex}

For modules all of whose localizations have the same depth, the notion of a regular sequence is consistent.

\begin{prop}\label{grade}
Let $R$ be a ring.
Let $M$ and $N$ be finitely generated $R$-modules.
Suppose that $\depth (M_\p)=\depth (N_\p)$ for all prime ideals $\p$ of $R$.
Then, for any sequence $\bm{x}=x_1,\ldots,x_n$ in $R$, $\bm{x}$ is an $M$-regular sequence if and only if it is an $N$-regular sequence.
In particular, $\grade(J, M)=\grade(J, N)$ for any ideal $J$ of $R$.
\end{prop}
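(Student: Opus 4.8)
The plan is to route everything through the depth characterisation of associated primes, i.e.\ the ungraded case of Lemma \ref{graded lemma}(3): for a finitely generated $R$-module $L$ and a prime $\p$ of $R$, one has $\p\in\ass_R(L)$ if and only if $\depth(L_\p)=0$. Since $\depth(L_\p)=\infty$ precisely when $L_\p=0$, the hypothesis $\depth(M_\p)=\depth(N_\p)$ for all $\p$ yields at once both $\supp_R(M)=\supp_R(N)$ and $\ass_R(M)=\ass_R(N)$. The second equality settles the case $n=1$: an element $x\in R$ is a nonzerodivisor on $M$ iff it avoids every member of $\ass_R(M)=\ass_R(N)$ iff it is a nonzerodivisor on $N$. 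Everything else consists in keeping this alive under quotients.

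I would prove, simultaneously by induction on $i$ for $0\le i\le n$, the two assertions: \emph{(a)} $x_1,\dots,x_i$ is a weak $M$-sequence (meaning $x_j$ is a nonzerodivisor on $M/(x_1,\dots,x_{j-1})M$ for every $1\le j\le i$) if and only if it is a weak $N$-sequence; and \emph{(b)} if it is, then $\depth\bigl((M/(x_1,\dots,x_i)M)_\p\bigr)=\depth\bigl((N/(x_1,\dots,x_i)N)_\p\bigr)$ for every prime $\p$ of $R$. The base case $i=0$ is the hypothesis. For the inductive step write $M'=M/(x_1,\dots,x_{i-1})M$ and $N'=N/(x_1,\dots,x_{i-1})N$. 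If the first $i-1$ elements do not form a weak $M$-sequence, then by \emph{(a)} they form no weak $N$-sequence either and there is nothing to check; otherwise \emph{(b)} for $i-1$ together with the depth characterisation gives $\ass_R(M')=\ass_R(N')$, so $x_i$ is a nonzerodivisor on $M'$ iff on $N'$, which is \emph{(a)} for $i$. For \emph{(b)} at step $i$, fix a prime $\p$. If $x_i\notin\p$, or if $M'_\p=0$ (equivalently, by \emph{(b)} for $i-1$, $N'_\p=0$), then both localised quotients vanish and both depths are $\infty$. If $x_i\in\p$ and $M'_\p\ne0$, then multiplication by $x_i$ is injective on the finitely generated $R_\p$-module $M'_\p$ and $x_i\in\p R_\p$, so Nakayama's lemma forces $M'_\p/x_iM'_\p\ne0$; hence $x_i$ is an $M'_\p$-regular element and $\depth\bigl((M'/x_iM')_\p\bigr)=\depth(M'_\p)-1$ by the ungraded case of Lemma \ref{local lemma}(4). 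The same computation applies to $N'$, and \emph{(b)} for $i-1$ closes the step.

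It remains to pass from weak sequences to genuine $M$-regular sequences. From $\supp_R(M)=\supp_R(N)$ we get $\supp_R(M/\bm{x}M)=\supp_R(N/\bm{x}N)$, so $M/\bm{x}M\ne0$ iff $N/\bm{x}N\ne0$; combined with \emph{(a)} for $i=n$ this shows that $\bm{x}$ is an $M$-regular sequence iff it is an $N$-regular sequence. For the final clause, if $JM=M$ then $JN=N$ as well, and both grades are infinite; otherwise a sequence $\bm{x}$ contained in $J$ is a maximal $M$-regular sequence iff $J$ consists of zerodivisors on $M/\bm{x}M$, i.e.\ (by the depth characterisation applied to $M/\bm{x}M$ and $N/\bm{x}N$, which have equal depths at all primes) iff $\bm{x}$ is a maximal $N$-regular sequence in $J$; since all maximal $M$-regular sequences in $J$ have the common length $\grade(J,M)$ and the same holds for $N$, we conclude $\grade(J,M)=\grade(J,N)$.

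I do not expect a real obstacle here; the only thing requiring attention is the bookkeeping that ties \emph{(a)} and \emph{(b)} together in the induction, so that the associated-prime equality is in place at each new level, and the careful but routine separation of the degenerate cases (a localisation already zero, or $x_i$ a unit locally) from the one case where the depth genuinely decreases by one.
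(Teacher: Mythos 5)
Your proposal is correct and follows essentially the same route as the paper's own proof: establish $\ass_R(M)=\ass_R(N)$ and $\supp_R(M)=\supp_R(N)$ from the depth hypothesis, settle $n=1$ this way, and then propagate the pointwise-equal-depth hypothesis through quotients by partial regular sequences via the ``depth drops by one'' computation, closing the induction. The paper's argument is more terse, but the mechanism (in particular the claim that $\depth(M/\bm{x}'M)_\p=\depth(N/\bm{x}'N)_\p$ persists at each stage) is exactly your assertion (b), and your explicit handling of the degenerate cases just spells out what the paper leaves implicit.
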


\begin{proof}
We observe that $\supp_R(M)=\supp_R(N)$.
We prove the proposition by induction on $n$.
It is seen that $\ass_R(M)=\ass_R(N)$ and $\supp_R(M/x M)=\supp_R(N/x N)$ for any $x\in R$ by assumption.
This says that the assertion of the proposition holds in the case $n=1$.
Suppose $n>1$.
We may assume that $\bm{x}'=x_1,\ldots,x_{n-1}$ is a regular sequence on both $M$ and $N$.
Then we see that $\depth (M/\bm{x}'M)_\p=\depth (N/\bm{x}'N)_\p$ for all prime ideals $\p$ of $R$.
Applying the case $n=1$ shows the assertion.
\end{proof}

The following two results are direct corollaries of Corollary \ref{main cor} and Proposition \ref{grade}.
The latter corollary recovers the theorem of Kodiyalam \cite{Ko}.
Note that, unlike Corollary \ref{recover2}, the integer $k$ does not depend on the ideal $J$ in Corollary \ref{sequence cor}.

\begin{cor}\label{sequence cor}
Let $R$ be a ring, $I$ an ideal of $R$, and $M$ a finitely generated $R$-module.
Suppose that we are in one of the cases of {\rm Corollary \ref{main cor}}.
Then there is $k>0$ such that $\bm{x}$ is an $M/I^t M$-regular sequence if and only if it is an $M/I^k M$-regular sequence for all integers $t\ge k$ and all sequences $\bm{x}=x_1,\ldots,x_n$ in $R$.
In particular, $\grade(J, M/I^t M)=\grade(J, M/I^k M)$ for all integers $t\ge k$ and all ideals $J$ of $R$.
\end{cor}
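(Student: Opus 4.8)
The plan is to reduce the statement to the two tools already at our disposal: the uniform depth stability of Corollary~\ref{main cor} and the transfer principle for regular sequences in Proposition~\ref{grade}. Since we assume one of the cases (1)--(7) of Corollary~\ref{main cor}, that corollary furnishes an integer $k>0$ with
$$
\depth (M/I^t M)_\p=\depth (M/I^k M)_\p
$$
for all integers $t\ge k$ and all prime ideals $\p$ of $R$. Fix such a $k$; this is the integer that will witness the assertion.

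Next, fix an integer $t\ge k$ and apply Proposition~\ref{grade} to the pair of finitely generated $R$-modules $M/I^tM$ and $M/I^kM$. Its hypothesis, namely that all localizations of the two modules have equal depth, is precisely the displayed equality above. Therefore, for any sequence $\bm{x}=x_1,\ldots,x_n$ in $R$, the sequence $\bm{x}$ is $M/I^tM$-regular if and only if it is $M/I^kM$-regular, which is the first assertion. The ``in particular'' clause of Proposition~\ref{grade}, applied to the same pair of modules, yields $\grade(J,M/I^tM)=\grade(J,M/I^kM)$ for every ideal $J$ of $R$, which is the second assertion.

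There is no genuine obstacle here: the argument is a direct combination of the two cited results, so the body of the proof is short. The only point worth emphasising is the source of the uniformity: the integer $k$ produced by Corollary~\ref{main cor} does not depend on $\p$, and this is exactly what Proposition~\ref{grade} requires as input; consequently the resulting $k$ depends neither on the sequence $\bm{x}$ nor on the ideal $J$. This is what makes the conclusion stronger than what one would get by arguing separately for each $\bm{x}$ or each $J$, and it is the feature flagged in the remark preceding the corollary (contrast with Corollary~\ref{recover2}).
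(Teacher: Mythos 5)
Your proof is correct and coincides with the paper's intended argument: the paper states that this corollary follows directly from Corollary~\ref{main cor} together with Proposition~\ref{grade}, which is exactly the combination you carry out.
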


\begin{cor}[Kodiyalam]\label{recover2}
Let $R$ be a local ring, $I$ and $J$ ideals of $R$, and $M$ a finitely generated $R$-module.
Then there is $k>0$ such that $\grade(J, M/I^t M)=\grade(J, M/I^k M)$ for all integers $t\ge k$.
\end{cor}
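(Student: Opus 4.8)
The plan is to obtain this as an immediate consequence of Corollary \ref{sequence cor}. The first step is to check that a local ring puts us in one of the cases listed in Corollary \ref{main cor}: the ring $\bar{R}=R/(I+\operatorname{Ann}_R(M))$ is a homomorphic image of the local ring $R$, hence local, hence semi-local, so we are in case (4). In particular the hypothesis of Corollary \ref{sequence cor} is met.

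The second step is simply to invoke Corollary \ref{sequence cor}: it yields an integer $k>0$ such that $\grade(J',M/I^tM)=\grade(J',M/I^kM)$ for all integers $t\ge k$ and \emph{every} ideal $J'$ of $R$. Specializing $J'=J$ gives the asserted equality. It is worth remarking, as the paper does, that the $k$ produced this way does not depend on $J$, which is more than Kodiyalam's original statement provides.

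I do not expect a genuine obstacle: the substantive content has already been placed in Theorem \ref{local depth} (openness of the codepth loci together with the asymptotic stabilization argument), in Corollary \ref{main cor}(4) (the passage from the semi-local to the local case via completion), and in Proposition \ref{grade} (which converts stability of the depths of all localizations into stability of grade). The only thing the proof of this corollary must spell out is the one-line reduction ``$R$ local $\Rightarrow\bar{R}$ semi-local'', after which every remaining assertion is a direct citation of Corollary \ref{sequence cor}.
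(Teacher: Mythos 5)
Your proposal is correct and is exactly the route the paper takes: the paper presents Corollary \ref{recover2} alongside Corollary \ref{sequence cor} as ``direct corollaries of Corollary \ref{main cor} and Proposition \ref{grade},'' and your reduction ``$R$ local $\Rightarrow \bar{R}$ local $\Rightarrow \bar{R}$ semi-local, hence case (4) applies'' followed by specializing the ideal $J'$ to $J$ is precisely the intended (and only) missing link. (The degenerate case $I+\operatorname{Ann}_R(M)=R$ is harmless since then $M/I^nM=0$ for all $n$ and the grades are all $\infty$.)
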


Finally, we remark that the theorem proved by Brodmann \cite{B} is recovered from this corollary.

\begin{rmk}\label{ASS}
Let $R$ be a ring, $I$ an ideal of $R$, and $M$ a finitely generated $R$-module.
Lemma \ref{RS1.3} asserts that $\bigcup_{i\ge 0} \ass_{R}(I^i M/I^{i+1} M)$ is a finite set.
By induction on $n>0$, it is seen that $\ass_{R}(M/I^{n} M)$ is contained in 
$\bigcup_{i=0}^{n-1} \ass_{R}(I^i M/I^{i+1} M)$; see \cite[Theorem 6.3]{Mat}.
The set $X:=\bigcup_{n>0} \ass_{R}(M/I^{n} M)$ is also a finite set.
It follows from Corollary \ref{recover2} that there is an integer $k>0$ such that 
$$
\depth (M/I^t M)_\p=\depth (M/I^k M)_\p
$$
for all integers $t\ge k$ and all prime ideals $\p$ of $R$ belonging to $X$.
This says that for all integers $t\ge k$, 
$$
\ass_{R}(M/I^{t} M)=\ass_{R}(M/I^{k} M).
$$
\end{rmk}

\begin{ac}
The author would like to thank his supervisor Ryo Takahashi for valuable comments.
\end{ac}

\end{document}